\documentclass[12pt]{article}

\usepackage{latexsym,amssymb,upref,amsmath,amsthm,amsfonts}
\usepackage {color}
\usepackage {pstricks, pst-node}
\usepackage {graphicx}
\usepackage {latexsym}
\usepackage [T1]{fontenc}
\usepackage [cp1250]{inputenc}
\usepackage{bbm}
\usepackage{bbold}

\newcommand{\achr}{\mathrm{achr}}
\newcommand{\ro}{\mathbb R}
\newcommand{\co}{\mathbb C}
\newcommand{\x}{\mathbb X}
\newcommand{\bp}{\noindent\textit{Proof. }}
\newcommand{\ep}{\blacksquare}

\newcommand{\exc}{\mathrm{exc}}
\newcommand{\frq}{\mathrm{frq}}
\newcommand{\cov}{\mathrm{cov}}
\newcommand{\Cov}{\mathrm{Cov}}
\newcommand{\Col}{\mathrm{Col}}

\newcommand{\wt}{\mathrm{wt}}
\newcommand{\cM}{\mathcal{M}}

\newtheorem{theorem}{Theorem}
\newtheorem{lemma}[theorem]{Lemma}
\newtheorem{proposition}[theorem]{Proposition}

\newtheorem{claim}{Claim}
\newtheorem{corollary}[theorem]{Corollary}

\title{\bf The achromatic number of the Cartesian product of $K_6$ and $K_q$}

\author{Mirko Hor\v n\' ak\thanks{\noindent E-mail address: mirko.hornak$@$upjs.sk}\\
Institute of Mathematics, P.J. \v{S}af\'arik University\\
Jesenn\'a 5, 040\ 01 Ko\v{s}ice, Slovakia}

\date{}

\begin{document}
\maketitle

\begin{abstract}
Let $G$ be a graph and $C$ a finite set of colours. A vertex colouring $f:V(G)\to C$ is complete if for any pair of distinct colours $c_1,c_2\in C$ one can find an edge $\{v_1,v_2\}\in E(G)$ such that $f(v_i)=c_i$, $i=1,2$. The achromatic number of $G$ is defined to be the maximum number $\achr(G)$ of colours in a proper complete vertex colouring of $G$. In the paper $\achr(K_6\square K_q)$ is determined for any integer $q$ such that either $8\le q\le40$ or $q\ge42$ is even.
\end{abstract}

\noindent {\bf Keywords:} complete vertex colouring, achromatic number, Cartesian product, complete graph

\section{Introduction}

Let $G$ be a finite simple graph and $C$ a finite set of colours. A vertex colouring $f:V(G)\to C$ is \textit{complete} provided that for any pair $\{c_1,c_2\}\in\binom C2$ (of distinct colours of $C$) there exists an edge $\{v_1,v_2\}$ (usually shortened to $v_1v_2$) of $G$ such that $f(v_i)=c_i$, $i=1,2$. The \textit{achromatic number} of $G$, in symbols $\achr(G)$, is the maximum cardinality of the colour set in a proper complete vertex colouring of $G$. The achromatic number was introduced in Harary, Hedetniemi, and Prins~\cite{HaHePr}, where among other things the following interpolation result was proved:

\begin{theorem}\label{ip}
If $G$ is a graph, and an integer $k$ satisfies $\chi(G)\le k\le\achr(G)$, there exists a proper complete vertex colouring of $G$ using $k$ colours. $\qed$
\end{theorem}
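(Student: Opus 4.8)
\noindent The plan is to show that the set of integers arising as the number of colours in some proper complete colouring of $G$ is exactly the interval from $\chi(G)$ to $\achr(G)$. The upper endpoint $\achr(G)$ is attained by the very definition of the achromatic number, and the lower endpoint $\chi(G)$ is attained as well: any proper colouring with $\chi(G)$ colours is automatically complete, since if some pair of colour classes were joined by no edge we could merge them into a single class and obtain a proper colouring with $\chi(G)-1$ colours, contradicting the definition of $\chi(G)$. Hence it suffices to prove a decrement lemma: from a proper complete colouring with $n$ colours, where $\chi(G)<n\le\achr(G)$, one can produce a proper complete colouring with $n-1$ colours. Iterating the lemma downward from $\achr(G)$ then fills in every intermediate value.

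For the decrement lemma I would first dispose of the case $n-1=\chi(G)$, where the target is a $\chi(G)$-colouring and is complete for free by the remark above. So assume $n-1>\chi(G)$ and let $V_1,\dots,V_n$ be the classes of a proper complete colouring $f$. The natural first attempt is to \emph{dissolve} one class, say $V_n$, by recolouring each of its vertices with one of the remaining $n-1$ colours. Because $V_n$ is independent its vertices share no edges, so they may be recoloured independently; a vertex $v\in V_n$ admits a legal new colour precisely when its neighbourhood does not already meet all of the other $n-1$ colours. Moreover, dissolving a class costs nothing in terms of completeness: the surviving classes $V_1,\dots,V_{n-1}$ were already pairwise joined by edges in $f$, so the resulting $(n-1)$-colouring is automatically complete. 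Thus $V_n$ can be dissolved whenever it contains no vertex adjacent to all $n-1$ other colours.

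The main obstacle is exactly that a complete colouring may block every class simultaneously: each $V_i$ can contain a ``rainbow'' vertex adjacent to all of the other colours, so that no single class is dissolvable and a purely local merge cannot work. To overcome this I would argue globally. Since $n-1\ge\chi(G)$, the graph admits proper colourings with exactly $n-1$ colours; among all of these I would choose one maximising the number of pairs of colours that are joined by an edge, and then show that such an extremal colouring can have no ``bad'' pair. Indeed, if two colour classes were joined by no edge, their union would be independent, permitting a merge together with a compensating split of another class; the delicate part---and the real heart of the proof---is to perform this merge-and-split, or an equivalent Kempe-type exchange that reroutes a single vertex so as to create the missing edge, in a way that strictly increases the count of joined colour pairs while keeping the number of colours equal to $n-1$, contradicting maximality. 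Carrying out this exchange without inadvertently destroying previously joined pairs is the step I expect to require the most care.
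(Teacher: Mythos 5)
The paper itself does not prove Theorem~\ref{ip}; it is quoted from Harary, Hedetniemi and Prins \cite{HaHePr} with a $\qed$, so there is no in-paper argument to compare against and your attempt must stand on its own. The parts of your proposal that are actually carried out are correct: the top value $\achr(G)$ is attained by definition; a proper colouring with exactly $\chi(G)$ colours is automatically complete (otherwise two unjoined classes could be merged); the reduction to a decrement lemma is the right skeleton; and dissolving one colour class of a complete $n$-colouring does preserve both properness and completeness whenever each of its vertices fails to see some other colour in its neighbourhood.

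The genuine gap is the final step, which you yourself flag as ``the real heart of the proof'' and do not carry out --- and the strategy you sketch for it cannot be completed as stated. You propose to take, among \emph{all} proper colourings of $G$ with exactly $n-1$ colours, one maximising the number of colour pairs joined by an edge, and to show that it has no bad pair. That argument nowhere uses the hypothesis that a complete $n$-colouring exists (equivalently, that $n-1\le\achr(G)-1$), so if it worked it would prove that $G$ has a complete $k$-colouring for \emph{every} $k$ with $\chi(G)\le k\le|V(G)|$, which is false. Concretely, for the star $K_{1,3}$ one has $\chi=\achr=2$, yet proper $3$-colourings exist; in every one of them the centre forms a singleton class and the three leaves split into two classes with no edge between them, so the extremal proper $3$-colouring still has a bad pair, and no merge-and-split can repair it (the count of joined pairs is stuck at $2<\binom{3}{2}$). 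Any correct proof of the decrement step must therefore make essential use of the given complete $n$-colouring --- for instance by modifying that specific colouring rather than optimising over all proper $(n-1)$-colourings --- and that is exactly the content your proposal leaves unproved.
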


In the present paper the achromatic number of $K_6\square K_q$, the Cartesian product of $K_6$ and $K_q$ (the notation following Imrich and Klav\v zar~\cite{IK} is adopted), is determined for all $q$'s satisfying either $8\le q\le40$ or $q\ge42$ and $q\equiv0\pmod2$. This is the second in a series of three papers, in which the problem of finding $\achr(K_6\square K_q)$ is completely solved. Some historical remarks concerning the achromatic number, a motivation of the problem and basic facts on proper complete colourings of Cartesian products of two complete graphs are available in the first paper \cite{Ho1}, where $\achr(K_6\square K_q)$ has been determined for odd $q\ge41$. Here we reproduce those facts from~\cite{Ho1} that are necessary for our paper. Maybe a bit surprisingly to prove that $\achr(K_6\square K_7)=18$ has required quite a long analysis contained in the third paper \cite{Ho2}.

For $m,n\in\mathbb Z$ we work with \textit{integer intervals} defined by 
\begin{equation*}
[m,n]=\{z\in\mathbb Z:m\le z\le n\},\qquad [m,\infty)=\{z\in\mathbb Z:m\le z\}.
\end{equation*}
If $p,q\in[1,\infty)$ and $V(K_r)=[1,r]$, $r=p,q$, then $V(K_p\square K_q)=[1,p]\times[1,q]$, and $E(K_p\square K_q)$ consists of edges $(i_1,j_1)(i_2,j_2)$, where $i_1,i_2\in[1,p]$ and $j_1,j_2\in[1,q]$ satisfy either $i_1=i_2$ and $j_1\ne j_2$ or $i_1\ne i_2$ and $j_1=j_2$.

Let $\mathcal M(p,q,C)$ denote the set of $p\times q$ matrices $M$ with entries from $C$ such that all lines (rows and columns) of $M$ have pairwise distinct entries, and any pair $\{\alpha,\beta\}\in\binom C2$ is \textit{good in $M$}, which means that there is a line of $M$ containing both $\alpha$ and $\beta$; the pair $\{\alpha,\beta\}$ is \textit{row-based} or \textit{column-based} (in $M$) depending on whether the involved line is a row or a column, respectively. In other words, the number of lines witnessing the fact that the pair $\{\alpha,\beta\}$ is good, is positive, and it may happen that the pair $\{\alpha,\beta\}$ is simultaneously row-based and column-based as well. For a matrix $M$ we denote by $(M)_{i,j}$ the entry of $M$ appearing in the $i$th row and the $j$th column. 
 
\begin{proposition}[see \cite{Ho1}]\label{general}
If $p,q\in[1,\infty)$ and $C$ is a finite set, then the following statements are equivalent:

$\mathrm(1)$ There is a proper complete vertex colouring of $K_p\square K_q$ using as colours elements of $C$.

$\mathrm(2)$ $\mathcal M(p,q,C)\ne\emptyset$.\quad$\qed$
\end{proposition}

The implication $(2)\Rightarrow(1)$ of Proposition~\ref{general} is based on a straightforward observation that if $M\in\mathcal M(p,q,C)$, then the vertex colouring $f_M$ of $K_p\square K_q$ defined by $f_M(i,j)=(M)_{i,j}$ is proper and complete as well.

\begin{proposition}[see \cite{Ho1}]\label{wlog}
If $p,q\in[1,\infty)$, $C,D$ are finite sets, $M\in\mathcal M(p,q,$ $C)$, mappings $\rho:[1,p]\to[1,p]$, $\sigma:[1,q]\to[1,q]$, $\pi:C\to D$ are bijections, and $M_{\rho,\sigma}$, $M_{\pi}$ are $p\times q$ matrices defined by $(M_{\rho,\sigma})_{i,j}=(M)_{\rho(i),\sigma(j)}$ and $(M_{\pi})_{i,j}=\pi((M)_{i,j})$, then $M_{\rho,\sigma}\in\mathcal M(p,q,C)$ and $M_{\pi}\in\mathcal M(p,q,D)$.\quad$\qed$
\end{proposition}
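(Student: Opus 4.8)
The plan is to verify directly that each of $M_{\rho,\sigma}$ and $M_\pi$ satisfies the two defining conditions of membership in the relevant $\mathcal{M}$-set: pairwise distinct entries along every line, and goodness of every pair of distinct colours. The guiding principle is that both operations preserve the family of lines viewed merely as \emph{sets} of entries, so that neither the distinctness nor the goodness condition can be disturbed.

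For $M_{\rho,\sigma}$ I would first record the key structural fact: since $\sigma$ is a bijection of $[1,q]$, the $i$th row of $M_{\rho,\sigma}$, namely $((M)_{\rho(i),\sigma(j)})_{j\in[1,q]}$, is exactly the $\rho(i)$th row of $M$ with its entries rearranged; symmetrically, using that $\rho$ is a bijection, the $j$th column of $M_{\rho,\sigma}$ is the $\sigma(j)$th column of $M$ rearranged. Because $\rho$ and $\sigma$ are bijections, as $i$ and $j$ range over their index sets the lines of $M_{\rho,\sigma}$ run, as entry sets, over precisely the lines of $M$. Distinctness along each line is then inherited (a rearrangement of a line with distinct entries still has distinct entries), and any pair $\{\alpha,\beta\}$ that lies in a line of $M$ lies in the correspondingly placed line of $M_{\rho,\sigma}$, so every pair of $\binom{C}{2}$ stays good. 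Hence $M_{\rho,\sigma}\in\mathcal{M}(p,q,C)$.

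For $M_\pi$ the entries lie in $D$ by construction, since $\pi:C\to D$. Distinctness along each line follows from injectivity of $\pi$: applying an injection to a tuple of pairwise distinct elements keeps them pairwise distinct. For goodness I would use that $\pi$ is onto: given any pair $\{\gamma,\delta\}\in\binom{D}{2}$, set $\alpha=\pi^{-1}(\gamma)$ and $\beta=\pi^{-1}(\delta)$; these are distinct members of $C$, so $\{\alpha,\beta\}$ is good in $M$, witnessed by some line $\ell$ containing both. The entrywise image of $\ell$ is the line of $M_\pi$ in the same position, and it contains $\pi(\alpha)=\gamma$ and $\pi(\beta)=\delta$, so $\{\gamma,\delta\}$ is good in $M_\pi$. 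Thus $M_\pi\in\mathcal{M}(p,q,D)$.

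As for difficulty, there is no genuine obstacle here; the statement is a bookkeeping lemma that justifies working ``up to'' permutations of rows, columns, and colours throughout the rest of the paper. The only points demanding a little care are purely notational: keeping straight that $\rho,\sigma$ act on positions while $\pi$ acts on values, and remembering that goodness for $M_\pi$ must be checked for pairs drawn from $D$ rather than $C$, which is precisely where surjectivity of $\pi$ enters. Everything else is a direct unwinding of the definitions.
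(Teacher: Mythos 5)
Your proof is correct: the paper itself offers no argument for this proposition (it is quoted from \cite{Ho1} with an immediate $\qed$), and your direct verification---lines of $M_{\rho,\sigma}$ coincide as sets with lines of $M$, and $\pi$ transports both distinctness (by injectivity) and goodness of every pair of $\binom{D}{2}$ (by surjectivity)---is exactly the standard unwinding of the definitions that the omitted proof amounts to. No gaps; the remark that goodness for $M_\pi$ must be checked over $\binom{D}{2}$ rather than $\binom{C}{2}$ is the one point worth making explicit, and you made it.
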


Let $M\in\mathcal M(p,q,C)$. The \textit{frequency} of a colour $\gamma\in C$ is the number $\frq(\gamma)$ of times $\gamma$ appears in $M$, while $\frq(M)$, the \textit{frequency} of $M$, is the minimum of frequencies of colours in $C$. A colour of frequency $l$ is an \textit{$l$-colour}, $C_l$ is the set of $l$-colours and $c_l=|C_l|$. $C_{l+}$ is the set of colours of frequency at least $l$ and $c_{l+}=|C_{l+}|$. For the (complete) colouring $f_M$ mentioned above denote $V_{\gamma}=f_M^{-1}(\gamma)\subseteq[1,p]\times[1,q]$, and let $N(V_{\gamma})$ be the neighbourhood of $V_{\gamma}$ (the union of neighborhoods of vertices in $V_{\gamma}$). The \textit{excess} of $\gamma$ is defined to be the maximum number $\exc(\gamma)$ of vertices in a set $S\subseteq N(V_{\gamma})$ such that the restriction of $f_M$ formed by uncolouring the vertices of $S$ is still complete with respect to pairs of colours containing $\gamma$. The \textit{excess} of $M$ is the minimum $\exc(M)$ of excesses of colours in $C$.

We denote by $\ro(i)$ the set of colours in the $i$th row of $M$ and by $\co(j)$ the set of colours in the $j$th column of $M$. Further, let
\begin{alignat*}{2}
\ro_l(i)&=C_l\cap\ro(i),\qquad &r_l(i)&=|\ro_l(i)|,\\
\co_l(j)&=C_l\cap\co(j), &c_l(j)&=|\co_l(j)|,\\
\end{alignat*}
\vskip-6mm
\noindent so that $\ro_l(i)$ and $\co_l(j)$ is the set of $l$-colours appearing in the row $i$ and those appearing in the column $j$, respectively. For $i,j,k\in[1,p]$ let
\begin{alignat*}{2}
\ro(i,j)&=C_2\cap\ro(i)\cap\ro(j), &r(i,j)&=|\ro(i,j)|,\\
\ro(i,j,k)&=C_3\cap\ro(i)\cap\ro(j)\cap\ro(k),\qquad &r(i,j,k)&=|\ro(i,j,k)|,
\end{alignat*}
\noindent and for $m,n\in[1,q]$ let
\begin{equation*}
\co(m,n)=C_2\cap\co(m)\cap\co(n),\qquad c(m,n)=|\co(m,n)|.
\end{equation*}
Thus $\ro(i,j)$ and $\co(m,n)$ is the set of 2-colours appearing in both rows $i,j$ and those appearing in both columns $m,n$, respectively, while $\ro(i,j,k)$ is the set of 3-colours appearing exactly in the rows $i,j,k$. For $\gamma\in C$ define
\begin{equation*}
\mathbb R(\gamma)=\{i\in[1,p]:\gamma\in\mathbb R(i)\}
\end{equation*}
as the set of (numbers of) rows containing the colour $\gamma$.

If $S\subseteq[1,p]\times[1,q]$, we say that a colour $\gamma\in C$ \textit{occupies a position in} $S$ (\textit{appears in $S$} or simply \textit{is in} $S$) if there exists $(i,j)\in S$ with $(M)_{i,j}=\gamma$. For a nonempty set of colours $A\subseteq C$, the \textit{set of columns covered by} $A$ is
\begin{equation*}
\Cov(A)=\{j\in[1,q]:\co(j)\cap A\ne\emptyset\},
\end{equation*}
\textit{i.e.}, the set of (numbers of) columns containing a colour of $A$. We put $\cov(A)=|\Cov(A)|$, and for $A=\{\alpha\},\{\alpha,\beta\},\{\alpha,\beta,\gamma\}$ we use a simplified notation $\Cov(\alpha)$, $\Cov(\alpha,\beta)$, $\Cov(\alpha,\beta,\gamma)$ and $\cov(\alpha)$, $\cov(\alpha,\beta)$, $\cov(\alpha,\beta,\gamma)$ instead of $\Cov(A)$ and $\cov(A)$.

\section{Matrix constructions}\label{mat}

\begin{proposition}\label{468}
$\achr(K_6\square K_4)\ge12$, $\achr(K_6\square K_6)\ge18$ and $\achr(K_6\square K_8)\ge21$.
\end{proposition}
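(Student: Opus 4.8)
The plan is to invoke Proposition~\ref{general}: it suffices to produce matrices $M_4\in\cM(6,4,C_4)$, $M_6\in\cM(6,6,C_6)$ and $M_8\in\cM(6,8,C_8)$ with $|C_4|=12$, $|C_6|=18$ and $|C_8|=21$. Each lower bound then follows immediately, since by Proposition~\ref{general} the existence of a matrix in the relevant $\cM$-set is equivalent to a proper complete colouring of $K_6\square K_q$ with that many colours. Thus the whole task reduces to writing down three concrete matrices and checking that (i)~every line has pairwise distinct entries and (ii)~every pair of colours is good.

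The organising principle I would use is to control the column structure first. In the $6\times4$ case all twelve colours can be given frequency $2$ (note $24=12\cdot2$), so each colour occupies exactly two of the four columns. Identifying the six $2$-subsets of $[1,4]$ with the edges of $K_4$ and placing exactly two colours on each edge makes every column a set of six colours and forces the column-based pairs to cover automatically every pair except those joining colours sitting on disjoint column-pairs. Exactly twelve such ``cross'' pairs remain, falling into three groups of four (one per perfect matching of $K_4$), and they can be absorbed by the rows through a swap pattern within pairs of rows. Concretely, the matrix
\[
M_4=\begin{pmatrix}
1&2&11&12\\
2&1&12&11\\
3&9&4&10\\
4&10&3&9\\
5&7&8&6\\
6&8&7&5
\end{pmatrix}
\]
has all lines proper, every colour of frequency $2$, and one checks that each remaining cross pair ($1$--$11$, \dots, $6$--$8$) occurs in a single row; hence $M_4\in\cM(6,4,[1,12])$.

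For the $6\times6$ case one again has $36=18\cdot2$, so I would keep all colours of frequency $2$ and run the same programme, now balancing the placements so that the $90$ column-slots and $90$ row-slots jointly realise all $\binom{18}{2}=153$ pairs with controlled overlap, the rows mopping up whatever the columns leave uncovered. The $6\times8$ case is the genuinely tight one: from $48=15\cdot2+6\cdot3$ one is forced to use six colours of frequency $3$, so the clean ``two colours per column class'' bookkeeping breaks down and these frequency-$3$ colours must be routed precisely to reach the pairs the frequency-$2$ colours cannot. I expect this routing---arranging the placements so that all $\binom{21}{2}=210$ pairs become good while every row and column stays repetition-free---to be the main obstacle; once suitable matrices are exhibited, the verification of (i)~and~(ii) is a finite, mechanical check, most efficiently carried out by listing the colour set of each line and confirming that the count of uncovered pairs drops to zero.
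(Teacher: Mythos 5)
Your reduction to Proposition~\ref{general} is exactly the paper's strategy, and your matrix $M_4$ is a valid (and structurally rather elegant) witness for $\achr(K_6\square K_4)\ge12$: all lines are repetition-free, every colour has frequency $2$, the four column sets $\{1,\dots,6\}$, $\{1,2,7,8,9,10\}$, $\{3,4,7,8,11,12\}$, $\{5,6,9,10,11,12\}$ cover all pairs except the twelve ``cross'' pairs, and those are picked up by the three doubled rows. This is a different matrix from the paper's $M_4$, but it does the job.

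The gap is that you prove only one of the three inequalities. For $q=6$ and $q=8$ you describe a programme (``run the same programme, balancing the placements\dots'', ``this routing\dots I expect to be the main obstacle; once suitable matrices are exhibited\dots'') but never exhibit the matrices. Since the proposition is a pure existence claim established by construction, the $6\times6$ and $6\times8$ cases remain unproven: nothing you write rules out that the ``balancing'' for $q=6$ or the ``routing'' of the six frequency-$3$ colours for $q=8$ is actually impossible, and indeed you flag the $q=8$ case as the genuinely tight one without resolving it. (It is tight: with $|C|=21$ the excess of a $2$-colour is only $2$, so the placement of the frequency-$3$ colours is heavily constrained, and the paper's $M_8$ reflects this with a carefully designed block of columns $6$--$8$.) To complete the proof you must write down explicit $6\times6$ and $6\times8$ matrices and perform the finite check you correctly identify as sufficient; the paper does precisely this.
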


\begin{proof}
Let $M_q$ be the $6\times q$ matrix below, $q=4,6,8$, where $\bar n$ stands for $10+n$ and $\bar{\bar n}$ for $20+n$.
\[
\begin{pmatrix}
1 &2 &3 &4\\
5 &6 &7 &8\\
9 &\bar0 &\bar1 &\bar2\\
2 &1 &4 &3\\
7 &8 &5 &6\\
\bar2 &\bar1 &\bar0 &9
\end{pmatrix}
\quad
\begin{pmatrix}
{1} &{2} &{3} &{4} &{5} &{6}\\
{7} &{8} &{9} &{\bar0} &{\bar1} &{\bar2}\\
{\bar3} &{\bar4} &{\bar5} &{\bar6} &{\bar7} &{\bar8}\\
2 &1 &\bar7 &\bar2 &\bar5 &\bar0\\
\bar1 &\bar8 &4 &3 &7 &\bar4\\
\bar6 &9 &8 &\bar3 &6 &5
\end{pmatrix}
\quad
\begin{pmatrix}
1 &2 &3 &4 &5 &\bar6 &\bar7 &\bar8\\
6 &7 &8 &9 &\bar0 &\bar8 &\bar6 &\bar7\\
\bar1 &\bar2 &\bar3 &\bar4 &\bar5 &\bar7 &\bar8 &\bar6\\
4 &8 &7 &1 &\bar9 &\bar5 &\bar{\bar0} &\bar{\bar1}\\
\bar3 &5 &\bar1 &\bar{\bar1} &2 &9 &\bar9 &\bar{\bar0}\\
\bar0 &\bar4 &\bar{\bar0} &\bar2 &6 &3 &\bar{\bar1} &\bar9
\end{pmatrix}
\]
One can easily check that $M_4\in\mathcal M(6,4,[1,12])$, $M_6\in\mathcal M(6,6,[1,18])$ and $M_8\in\mathcal M(6,8,[1,21])$. Therefore, we are done by Proposition~\ref{general}. 
\end{proof}

Let $r\in[3,9]$ and $\bar r=\lceil\frac{r-1}3\rceil$ so that $\bar r\in[1,3]$. Consider $r$-element colour sets  
\[ V_r=\{v_r(k):k\in[1,r]\},\ W_r=\{w_r(k):k\in[1,r]\} \] 
such that $V_r$, $W_r$ and $[1,18]$ are pairwise disjoint. Further, let $N_r$ be the $6\times r$ matrix with entries defined by
\begin{alignat*}{3}
(N_r)_{i,j}&=v_r(i+j-1), &i&\in[1,3], &j&\in[1,r],\\
(N_r)_{i,j}&=w_r((i-4)\bar r+j),\quad &i&\in[4,6],\quad &j&\in[1,r],
\end{alignat*} 
where $u_r(m)=u_r(n)$ for $u\in\{v,w\}$ and $m,n\in\mathbb Z$, $m\equiv n\pmod r$.

\begin{lemma}\label{39}
If $r\in[3,9]$, then $N_r\in\mathcal M(6,r,V_r\cup W_r)$.
\end{lemma}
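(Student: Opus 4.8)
The plan is to verify directly the two conditions defining membership in $\cM(6,r,V_r\cup W_r)$: that every line of $N_r$ has pairwise distinct entries, and that every pair of colours is good. Throughout I read the arguments of $v_r$ and $w_r$ modulo $r$, so that $|V_r|=|W_r|=r$ and, since $V_r\cap W_r=\emptyset$, $|V_r\cup W_r|=2r$. It is convenient to keep in mind the two facts about $\bar r=\lceil(r-1)/3\rceil$ that will do all the work: since $r\le9$ we have $\bar r\le3$, and an easy check over $r\in[3,9]$ gives $2\bar r+3\ge r$.

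For distinct entries in each line, I would first note that each of rows $1,2,3$ lists $v_r(i),v_r(i+1),\dots,v_r(i+r-1)$ as $j$ runs through $[1,r]$ and is therefore a permutation of $V_r$; likewise each of rows $4,5,6$ is a permutation of $W_r$, so all rows are fine. In column $j$ the top three entries are $v_r(j),v_r(j+1),v_r(j+2)$ and the bottom three are $w_r(j),w_r(j+\bar r),w_r(j+2\bar r)$; as $V_r\cap W_r=\emptyset$ it suffices to check distinctness inside each triple. The top triple is distinct because $0,1,2$ are distinct modulo $r$ (here $r\ge3$), and the bottom triple is distinct because $0,\bar r,2\bar r$ are distinct modulo $r$, which I would read off from $1\le\bar r\le3$ and $r\in[3,9]$ (these force $0<\bar r<r$, and $0<2\bar r\le6$ with $r\nmid2\bar r$).

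For completeness I would split pairs by type. Any pair inside $V_r$ sits together in row $1$ (in fact in each of rows $1,2,3$), and any pair inside $W_r$ sits together in row $4$; both kinds are row-based. A mixed pair $\{v_r(a),w_r(b)\}$ cannot be row-based, since $V_r$-colours occupy only rows $1$–$3$ and $W_r$-colours only rows $4$–$6$, so it must be column-based. Reading off the covered columns gives $\Cov(v_r(a))=\{a,a-1,a-2\}$ and $\Cov(w_r(b))=\{b,b-\bar r,b-2\bar r\}$ (indices mod $r$), so the pair is good exactly when $a-b\equiv s-k\bar r\pmod r$ for some $s,k\in\{0,1,2\}$. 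Thus the entire completeness requirement reduces to one claim: the difference set $\{s-k\bar r:s,k\in\{0,1,2\}\}$ exhausts $\mathbb Z/r\mathbb Z$.

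This covering claim is the crux, and it is precisely where the value $\bar r=\lceil(r-1)/3\rceil$ is needed; I would prove it uniformly rather than by a case table. Write the required residues as the union of the three length-$3$ cyclic intervals $\{-k\bar r,1-k\bar r,2-k\bar r\}$, $k=0,1,2$, whose left endpoints $0,-\bar r,-2\bar r$ are spaced $\bar r$ apart. Because $\bar r\le3$, consecutive intervals overlap or abut, so their union is the single cyclic interval $[-2\bar r,2]$ of length $2\bar r+3$; and because $2\bar r+3\ge r$, this interval already covers all of $\mathbb Z/r\mathbb Z$. The only genuine content is the two elementary inequalities $\bar r\le3$ and $2\bar r+3\ge r$ — everything else is bookkeeping. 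Conceptually, the matrix is engineered so that the three $V_r$-rows run along consecutive diagonals while the three $W_r$-rows are spaced by $\bar r\approx r/3$, exactly so that the two triples of columns always meet and every mixed colour pair shares a column.
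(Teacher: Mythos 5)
Your proposal is correct and follows essentially the same route as the paper: verify properness line by line using the disjointness of $V_r$ and $W_r$ together with $0<\bar r<2\bar r<r$, handle same-set pairs as row-based, and reduce mixed pairs to the fact that the three length-$3$ column intervals with left endpoints spaced $\bar r\le3$ apart merge into a single cyclic interval of length $2\bar r+3\ge r$, which is exactly the paper's computation of $I_{r,j}=[j-2,j+2\bar r]$. The only cosmetic difference is that you phrase the covering step via the difference set $\{s-k\bar r\}$ in $\mathbb Z/r\mathbb Z$ rather than via the set $W_{r,j}$, and your justification of $r\nmid2\bar r$ leans on a finite check where the paper gives the one-line bound $2\bar r\le\frac{2(r+1)}3<r$.
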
 
\begin{proof}
Let $C=V_r\cup W_r$. We first show that the colouring $f_{N_r}:[1,6]\times[1,r]\to C$ is proper. For that purpose suppose $i,i_1,i_2\in[1,6]$, $i_1\ne i_2$, $j,j_1,j_2\in[1,r]$, $j_1\ne j_2$. 

From $|i+j_1-1-(i+j_2-1)|=|j_1-j_2|\in[1,r-1]$ for $i\in[1,3]$ and $|(i-4)\bar r+j_1-[(i-4)\bar r+j_2]|=|j_1-j_2|\in[1,r-1]$ for $i\in[4,6]$ it follows that $j_1\not\equiv j_2\pmod r$, and so $(N_r)_{i,j_1}\ne(N_r)_{i,j_2}$.

If $i_1,i_2\in[4,6]$, then $|(i_1-4)\bar r+j-[(i_2-4)\bar r+j]|=|i_1-i_2|\bar r\in[1,r-1]$, because $|i_1-i_2|\in[1,2]$, and from $0<(r-1)/3$ it follows that $1\le\bar r<2\bar r\le\frac{2(r+1)}3<r$. Therefore, $(N_r)_{i_1,j}\ne(N_r)_{i_2,j}$.

If $i_1\in[1,3]$ and $i_2\in[4,6]$, the desired assertion $(N_r)_{i_1,j}\ne(N_r)_{i_2,j}$ follows from $V_r\cap W_r=\emptyset$, and if $i_1,i_2\in[1,3]$, it suffices to realise that $i_1+j-1\ne i_2+j-1$, since $|i_1+j-1-(i_2+j-1)|=|i_1-i_2|\in[1,2]$.

Further, we have to show that each pair $\{\alpha,\beta\}\in\binom C2$ is good in $N_r$. If both $\alpha,\beta$ are either in $V_r$ or in $W_r$, then $\mathbb R(\alpha)=\mathbb R(\beta)\in\{[1,3],[4,6]\}$, and the pair $\{\alpha,\beta\}$ is row-based.

If $\alpha\in V_r$ and $\beta\in W_r$, there is $j\in[1,r]$ such that $\alpha=(N_r)_{1,j}=(N_r)_{2,j-1}=(N_r)_{3,j-2}$, hence $\alpha$ shares a common column with any colour of the set
$W_{r,j}=\{w_r((i-4)\bar r+j-k):i\in[4,6],k\in[0,2]\}$. We have
\begin{equation*}
I_{r,j}=\bigcup_{i=4}^6\bigcup_{k=0}^2\{(i-4)\bar r+j-k\}=\bigcup_{i=4}^6[j-2+(i-4)\bar r,j+(i-4)\bar r]=[j-2,j+2\bar r],
\end{equation*}
because $j-2+l\bar r\le j+(l-1)\bar r$, which means that there is no gap between integer intervals $[j-2+(l-1)\bar r,j+(l-1)\bar r]$ and $[j-2+l\bar r,j+l\bar r]$, $l=1,2$. Further, since $|I_{r,j}|=3+2\bar r\ge r$ (the assumption $r-1\ge3+2\lceil\frac{r-1}3\rceil$ would lead to $r-1\ge3+2\frac{r-1}3$ and $r\ge10$, a contradiction), the integer interval $I_{r,j}$ covers all congruence classes modulo $r$. This yields $W_{r,j}=\bigcup_{m\in I_{r,j}}\{w_r(m)\}=W_r$, and so the pair $\{\alpha,\beta\}$ is column-based.
\end{proof} 

\begin{proposition}\label{915}
If $q\in[9,15]$, then $\achr(K_6\square K_q)\ge2q+6$.
\end{proposition}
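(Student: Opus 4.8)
The plan is to realise the bound through Proposition~\ref{general}, i.e.\ to exhibit a matrix in $\mathcal M(6,q,C)$ with $|C|=2q+6$, by gluing together two matrices already at hand. Writing $r=q-6$, the hypothesis $q\in[9,15]$ gives $r\in[3,9]$, so both ingredients are available: the matrix $M_6\in\mathcal M(6,6,[1,18])$ from the proof of Proposition~\ref{468} and the matrix $N_r\in\mathcal M(6,r,V_r\cup W_r)$ from Lemma~\ref{39}. I would set $M=[M_6\mid N_r]$, the $6\times q$ matrix whose first $6$ columns form $M_6$ and whose last $r$ columns form $N_r$, and take $C=[1,18]\cup V_r\cup W_r$. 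Since these three colour sets are pairwise disjoint, $|C|=18+2r=2q+6$, so it suffices to prove $M\in\mathcal M(6,q,C)$.

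The line condition and the ``old'' pairs come almost for free. Every column of $M$ is a column of $M_6$ or a column of $N_r$, hence has pairwise distinct entries; every row of $M$ is a row of $M_6$ (entries in $[1,18]$) followed by a row of $N_r$ (entries in $V_r\cup W_r$), and as $[1,18]$ is disjoint from $V_r\cup W_r$ the whole row again has distinct entries. A pair $\{\alpha,\beta\}\in\binom{[1,18]}2$ is good in $M_6$ and stays good in $M$, because a row of $M_6$ is contained in the corresponding row of $M$ while a column of $M_6$ is a column of $M$; the same argument applied to $N_r$ disposes of every pair inside $V_r\cup W_r$.

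The real content lies in the \emph{mixed} pairs, those with one colour in $[1,18]$ and the other in $V_r\cup W_r$, which can only be made good through rows. Here I would exploit two covering facts. On the side of $N_r$, each of the rows $1,2,3$ contains all of $V_r$ (row $i\in[1,3]$ reads $v_r(i+j-1)$ over $j\in[1,r]$, i.e.\ $r$ consecutive residues modulo $r$) and each of the rows $4,5,6$ contains all of $W_r$ (similarly row $i\in[4,6]$ runs through all residues modulo $r$). On the side of $M_6$, rows $1,2,3$ are exactly $\{1,\dots,6\}$, $\{7,\dots,12\}$, $\{13,\dots,18\}$, so they partition $[1,18]$, while rows $4,5,6$ together again exhaust $[1,18]$. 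Consequently, if $\alpha\in[1,18]$ and $\beta\in V_r$, then $\alpha$ lies in some row $i\in\{1,2,3\}$ of $M$ and $\beta$ lies in that same row via $N_r$, so $\{\alpha,\beta\}$ is row-based; if instead $\beta\in W_r$, then $\alpha$ lies in some row $i\in\{4,5,6\}$ and $\beta$ again lies in that row, so the pair is row-based as well. The one step demanding genuine checking---and hence the main obstacle---is confirming that the bottom three rows of $M_6$ really do cover all of $[1,18]$ (the top three do so by inspection). Once this is verified, every mixed pair is good, whence $M\in\mathcal M(6,q,C)$ and the claim follows from Proposition~\ref{general}.
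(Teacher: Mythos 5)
Your proposal is correct and follows essentially the same route as the paper: form the block matrix $(M_6\,N_{q-6})$, inherit properness and the unmixed good pairs from Proposition~\ref{468} and Lemma~\ref{39}, and handle mixed pairs by observing that every colour of $[1,18]$ appears once in rows $1$--$3$ and once in rows $4$--$6$ of $M_6$ while $\mathbb R(\beta)$ is all of $[1,3]$ or all of $[4,6]$ for $\beta\in V_{q-6}\cup W_{q-6}$. The one fact you flag as needing verification (that rows $4$--$6$ of $M_6$ exhaust $[1,18]$) does hold by direct inspection of $M_6$.
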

\begin{proof}
The block matrix $M_q=(M_6\,N_{q-6})$ belongs to $\mathcal M(6,q,C)$ with $C=[1,18]\cup V_{q-6}\cup W_{q-6}$. To see it first realise that since the colourings $f_{M_6}$ and $f_{N_{q-6}}$ are proper (Proposition~\ref{468}, Lemma~\ref{39}), and $[1,18]\cap(V_{q-6}\cup W_{q-6})=\emptyset$, the colouring $f_{M_q}$ is proper, too.

Next, we have to show that each pair $\{\alpha,\beta\}\in\binom C2$ is good in $M_q$. The colourings $f_{M_6}$ and $f_{N_{q-6}}$ are complete, hence it suffices to restrict our attention to $\alpha\in[1,18]$ and $\beta\in V_{q-6}\cup W_{q-6}$. In such a case $|\mathbb R(\alpha)\cap[1,3]|=1=|\mathbb R(\alpha)\cap[4,6]|$ and $\mathbb R(\beta)\in\{[1,3],[4,6]\}$ so that $\mathbb R(\alpha)\cap\mathbb R(\beta)\ne\emptyset$, and the pair $\{\alpha,\beta\}$ is row-based.

So, Proposition~\ref{general} yields $\achr(K_6\square K_q)\ge|C|=18+2(q-6)=2q+6$.
\end{proof}

For $l=0,1,2,3$ let $r_l\in[3,9]$, $\bar r_l=\lceil\frac{r_l-1}3\rceil$, and let $N_{r_l}^l$ be the $6\times r_l$ matrix with 
\begin{alignat*}{3}
(N_{r_l}^l)_{i,j}&=v_{r_l}^l(i+j-1), &i&\in[1,3], &j&\in[1,r_l],\\
(N_{r_l}^l)_{i,j}&=w_{r_l}^l((i-4)\bar r_l+j),\quad &i&\in[4,6],\quad &j&\in[1,r_l],
\end{alignat*} 
where the sets $[1,12]$ and
\[ V_{r_l}^l=\{v_{r_l}^l(k):k\in[1,r_l]\},\ W_{r_l}^l=\{w_{r_l}^l(k):k\in[1,r_l]\},\ l=0,1,2,3, \]
are pairwise disjoint; moreover, we suppose that $u_{r_l}^l(m)=u_{r_l}^l(n)$ for $u\in\{v,w\}$ and $m,n\in\mathbb Z$, $m\equiv n\pmod {r_l}$. Further, let $\tilde N_{r_l}^l$ be the $6\times r_l$ matrix obtained from $N_{r_l}^l$ by interchanging its rows $l$ and $l+3$, $l=1,2,3$.

\begin{proposition}\label{1640}
If $q\in[16,40]$, then $\achr(K_6\square K_q)\ge2q+4$.
\end{proposition}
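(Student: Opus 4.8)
The plan is to exhibit an explicit matrix in $\mathcal M(6,q,C)$ with $|C|=2q+4$ and then invoke Proposition~\ref{general}. I would take the block matrix
\[
M_q=\bigl(M_4\ \ N_{r_0}^0\ \ \tilde N_{r_1}^1\ \ \tilde N_{r_2}^2\ \ \tilde N_{r_3}^3\bigr),
\]
where $M_4$ is the $6\times4$ matrix of Proposition~\ref{468} and $r_0,r_1,r_2,r_3\in[3,9]$ are chosen with $r_0+r_1+r_2+r_3=q-4$. Such a choice exists: writing $r_l=3+s_l$ reduces the requirement to $s_l\in[0,6]$ with $\sum_l s_l=q-16\in[0,20]\subseteq[0,24]$, which is clearly solvable. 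Setting $C=[1,12]\cup\bigcup_{l=0}^3(V_{r_l}^l\cup W_{r_l}^l)$ with these sets pairwise disjoint and $|V_{r_l}^l|=|W_{r_l}^l|=r_l$ gives $|C|=12+2(q-4)=2q+4$, the desired value; note the construction never refers to the parity of $q$, so it covers the odd cases not already settled by Proposition~\ref{16+lb}. Properness of $f_{M_q}$ is then routine: every column lies inside a single block and is proper there (for $M_4$ by Proposition~\ref{468}, for the $N$/$\tilde N$ blocks by Lemma~\ref{39} together with Proposition~\ref{wlog}, since $\tilde N_{r_l}^l$ arises from $N_{r_l}^l$ by a row interchange), while each full row has distinct entries because the blocks draw colours from pairwise disjoint sets.

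For completeness I would split a pair $\{\alpha,\beta\}\in\binom C2$ according to the blocks containing $\alpha$ and $\beta$. Pairs with $\alpha,\beta\in[1,12]$ are good by Proposition~\ref{468}; pairs lying inside a single appended block are good by Lemma~\ref{39} (applied to $\tilde N_{r_l}^l$, still a member of $\mathcal M$ by Proposition~\ref{wlog}). The remaining mixed pairs I would handle through the row-sets $\ro(\cdot)$. The crucial point is that the interchanges defining $\tilde N_{r_l}^l$ place $V_{r_l}^l$ and $W_{r_l}^l$ on complementary triples of $[1,6]$, namely $\{1,2,3\}$ versus $\{4,5,6\}$ for $l=0$, $\{2,3,4\}$ versus $\{1,5,6\}$ for $l=1$, $\{1,3,5\}$ versus $\{2,4,6\}$ for $l=2$, and $\{1,2,6\}$ versus $\{3,4,5\}$ for $l=3$; these eight triples pair off into complementary couples exactly within the blocks. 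Hence any two colours from different appended blocks have non-complementary, and so intersecting, row-sets, and as they occupy different columns the shared row witnesses that the pair is row-based. Finally, a colour of $[1,12]$ has row-set in $\{\{1,4\},\{2,5\},\{3,6\}\}$, and a short check confirms that each of these three pairs meets all eight triples, so such mixed pairs are row-based as well. With all pairs shown good, $M_q\in\mathcal M(6,q,C)$ and Proposition~\ref{general} gives $\achr(K_6\square K_q)\ge|C|=2q+4$.

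The main obstacle is precisely this cross-block completeness check: one must confirm that the eight triples are mutually non-complementary across distinct blocks (so two triples from different blocks always intersect) and that the three two-element row-sets coming from $M_4$ meet every triple. This is only a finite incidence verification, but it is the heart of the argument — the purpose of the row swaps $\tilde N_{r_l}^l$ is exactly to distribute $V_{r_l}^l$ and $W_{r_l}^l$ over complementary triples while keeping each triple's complement inside its own block, which is what forces every cross-block pair to share a row and thus be good.
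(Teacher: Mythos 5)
Your proposal is correct and follows essentially the same route as the paper: the same block matrix $(M_4\ N_{r_0}^0\ \tilde N_{r_1}^1\ \tilde N_{r_2}^2\ \tilde N_{r_3}^3)$, the same appeal to Proposition~\ref{468}, Lemma~\ref{39} and Proposition~\ref{wlog} for properness and for within-block pairs, and the same row-set intersection argument for cross-block pairs (your observation that the eight triples are pairwise non-complementary across blocks, hence intersecting, is just a tidier packaging of the paper's explicit case analysis of $\mathbb R(\alpha)\cap\mathbb R(\beta)$).
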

\begin{proof}
Since $4\cdot3=12\le q-4\le36=4\cdot9$, there are integers $r_l\in[3,9]$, $l=0,1,2,3$, such that $\sum_{l=0}^3 r_l=q-4$. Let us show that the block matrix $M_q=(M_4\,N_{r_0}^0\,\tilde N_{r_1}^1\,\tilde N_{r_2}^2\,\tilde N_{r_3}^3)$ belongs to $\mathcal M(6,q,C)$ with $C=[1,12]\cup\bigcup_{l=0}^3(V_{r_l}^l\cup W_{r_l}^l)$.

By Lemma~\ref{39} and Proposition~\ref{wlog} we have $N_{r_l}\in\mathcal M(6,r_l,V_{r_l}\cup W_{r_l})$ and $N_{r_l}^l\in\mathcal M(6,r_l,V_{r_l}^l\cup W_{r_l}^l)$, $l=0,1,2,3$, as well as $\tilde N_{r_l}^l\in\mathcal M(6,r_l,V_{r_l}^l\cup W_{r_l}^l)$, $l=1,2,3$. The colouring $f_M$ with $M\in\{M_4,N_{r_0}^0\,\tilde N_{r_1}^1\,\tilde N_{r_2}^2\,\tilde N_{r_3}^3\}$ is proper, and the sets $[1,12]$, $V_{r_l}^l\cup W_{r_l}^l$, $l=0,1,2,3$, are pairwise disjoint, hence the colouring $f_{M_q}$ is proper.

Now consider a pair $\{\alpha,\beta\}\in\binom C2$. If both $\alpha,\beta$ are either in $[1,12]$ or in $V_{r_l}^l\cup W_{r_l}^l$ with $l\in[0,3]$, then the pair $\{\alpha,\beta\}$ is good in $M_q$, because the colourings $f_{M_4}$ and $f_M$ with $M\in\{N_{r_0}^0\,\tilde N_{r_1}^1\,\tilde N_{r_2}^2\,\tilde N_{r_3}^3\}$ are complete
(Propositions~\ref{wlog},\,\ref{468}, Lemma~\ref{39}).

In all remaining cases we show that $\mathbb R(\alpha)\cap\mathbb R(\beta)\ne\emptyset$, which means that the pair $\{\alpha,\beta\}$ is row-based.

If $\alpha\in[1,12]$ and $\beta\in\bigcup_{l=0}^3(V_{r_l}^l\cup W_{r_l}^l)$, then $\mathbb R(\alpha)\in\{\{1,4\},\{2,5\},\{3,6\}\}$ and $\mathbb R(\beta\}\in\{\{1,2,3\},\{1,2,6\},\{1,3,5\},\{1,5,6\},\{2,3,4\},\{2,4,6\},\{3,4,5\},\{4,5,6\}\}$ so that $\mathbb R(\alpha)\cap\mathbb R(\beta)\ne\emptyset$ follows immediately.

If $\alpha\in V_{r_0}^0$ and $\beta\in\bigcup_{l=1}^3 V_{r_l}^l$, then $\mathbb R(\alpha)\cap\mathbb R(\beta)=[1,3]\setminus\{l\}$; similarly, with $\alpha\in W_{r_0}^0$ and $\beta\in\bigcup_{l=1}^3 W_{r_l}^l$ we have $\mathbb R(\alpha)\cap\mathbb R(\beta)=[4,6]\setminus\{l\}$. 

If $\{i,j,k\}=[1,3]$, $\alpha\in V_{r_i}^i$ and $\beta\in V_{r_j}^j$, then $\mathbb R(\alpha)\cap\mathbb R(\beta)=\{k\}$; the same conclusion holds provided that $\{i,j,k\}=[4,6]$, $\alpha\in W_{r_i}^i$ and $\beta\in W_{r_j}^j$.

If there is $l\in[1,3]$ such that either $\alpha\in V_{r_0}^0$ and $\beta\in W_{r_l}^l$ or $\alpha\in W_{r_0}^0$ and $\beta\in V_{r_l}^l$, then $\mathbb R(\alpha)\cap\mathbb R(\beta)=\{l\}$.

Finally, $\alpha\in V_{r_i}^i$ with $i\in[1,3]$ and $\beta\in W_{r_j}^j$ with $j\in[1,3]\setminus\{i\}$ leads to $\mathbb R(\alpha)\cap\mathbb R(\beta)=\{i+3\}$.

Thus the colouring $f_{M_q}$ is complete and $M_q\in\mathcal M(6,q,C)$. Since $|C|=12+\sum_{l=0}^32r_l=2q+4$, by Proposition~\ref{general} we get $\achr(K_6\square K_q)\ge2q+4$.
\end{proof}

For a given $s\in[2,\infty)$ consider colour sets $U_s=\{u_k:k\in[1,s]\}$ with $U\in\{X,Y,Z,T\}$ such that the sets $[1,12],X_s,Y_s,Z_s$ and $T_s$ are pairwise disjoint. 

\begin{proposition}\label{16+lb}
If $q\in[42,\infty)$ and $q\equiv0\pmod2$, then $\achr(K_6\square K_q)\ge 2q+4$.
\end{proposition}
\begin{proof}
Let $s=\frac{q-4}2$, and let $M_q$ be the $6\times q$ matrix below. We show that $M_q\in\mathcal M(6,q,C)$ for $C=[1,12]\cup X_s\cup Y_s\cup Z_s\cup T_s$. Obviously, since $s\ge19\ge2$, the colouring $f_{M_q}$ is proper.
\setcounter{MaxMatrixCols}{14}
\begin{equation*}
\begin{pmatrix}
1 &2 &3 &4 &x_1 &x_2 &\dots &x_{s-1} &x_s &y_1 &y_2 &\dots &y_{s-1} &y_s\\
5 &6 &7 &8 &x_s &x_1 &\dots &x_{s-2} &x_{s-1} &z_1 &z_2 &\dots &z_{s-1} &z_s\\
9 &10 &11 &12 &t_1 &t_2 &\dots &t_{s-1} &t_s &x_1 &x_2 &\dots &x_{s-1} &x_s\\
2 &1 &4 &3 &z_1 &z_2 &\dots &z_{s-1} &z_s &t_1 &t_2 &\dots &t_{s-1} &t_s\\
7 &8 &5 &6 &t_s &t_1 &\dots &t_{s-2} &t_{s-1} &y_s &y_1 &\dots &y_{s-2} &y_{s-1}\\
12 &11 &10 &9 &y_1 &y_2 &\dots &y_{s-1} &y_s &z_s &z_1 &\dots &z_{s-2} &z_{s-1}
\end{pmatrix}
\textbf{}\end{equation*}

Notice that $M_q$ has a submatrix $M_4$ (formed by the first four columns of $M_q$). The colouring $f_{M_4}$ is complete (Proposition~\ref{468}), hence a pair $\{\alpha,\beta\}\in\binom C2$ is good in $M_q$ provided that $\alpha,\beta\in[1,12]$. So, it remains to consider pairs $\{\alpha,\beta\}$ with $\alpha\in C$ and $\beta\in X_s\cup Y_s\cup Z_s\cup T_s$. Realise that $\ro(\alpha)\in\mathcal R_1\cup\mathcal R_2$ and $\ro(\beta)\in\mathcal R_2$, where $\mathcal R_1=\{\{1,4\},\{2,5\},\{3,6\}\}$ and $\mathcal R_2=\{\{1,2,3\},\{1,5,6\},\{2,4,6\},\{3,4,5\}\}$. As $R\cap R_2\ne\emptyset$ for any $R\in\mathcal R_1\cup\mathcal R_2$ and any $R_2\in\mathcal R_2$, the pair $\{\alpha,\beta\}$ is row-based.

Thus, by Proposition~\ref{general} we see that $\achr(K_6\square K_q)\ge 4s+12=2q+4$. \end{proof}

\section{Some basic facts concerning matrices in $\mathcal M(p,q,C)$}

In this section we first reproduce those facts from~\cite{Ho1} that are necessary for our paper.

\begin{lemma}[see \cite{Ho1}]\label{bgen}
If $p,q\in[1,\infty)$, $C$ is a finite set, $M\in\mathcal M(p,q,C)$ and $\gamma\in C$, then the following hold:

$1.$ $\frq(\gamma)\le\min(p,q)$;

$2.$ $\frq(\gamma)=l$ implies $\exc(\gamma)=l(p+q-l-1)-(|C|-1)\ge0$;

$3.$ $\frq(M)=l$ implies $|C|\le\lfloor\frac{pq}l\rfloor$. $\qed$
\end{lemma}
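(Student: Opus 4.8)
The plan is to handle the three parts in order, each by an elementary counting argument resting on the two defining properties of a matrix in $\mathcal M(p,q,C)$: every line has pairwise distinct entries, so $\gamma$ occurs at most once in each row and at most once in each column; and every pair of colours is good, so for each $\delta\ne\gamma$ the colour $\delta$ shares a line with $\gamma$.

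For part~1 I would note that the $l$ occurrences of $\gamma$ lie in $l$ distinct rows and $l$ distinct columns, since no line repeats a colour. Hence $|\mathbb R(\gamma)|=\cov(\gamma)=l$, and as there are only $p$ rows and $q$ columns we get $l\le p$ and $l\le q$, that is, $l\le\min(p,q)$.

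Part~2 is the heart of the lemma, and the idea is to bound $|C|-1$, the number of colours distinct from $\gamma$, by counting cells. Because every pair $\{\gamma,\delta\}$ with $\delta\ne\gamma$ is good, each such $\delta$ must occupy a cell lying in a row of $\mathbb R(\gamma)$ or a column of $\Cov(\gamma)$. Let $S$ be the union of these $l$ rows and $l$ columns. By inclusion--exclusion (the chosen rows and columns meet in exactly $l^2$ cells, namely the grid $\mathbb R(\gamma)\times\Cov(\gamma)$), $S$ contains $lq+lp-l^2=l(p+q-l)$ cells. The colour $\gamma$ itself occupies exactly $l$ of these, since its $l$ positions all lie in $S$ (indeed in the intersection grid) and $\gamma$ appears nowhere else; so the cells of $S$ not coloured $\gamma$ number $l(p+q-l)-l=l(p+q-l-1)$. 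As all $|C|-1$ colours other than $\gamma$ must appear among these cells, and distinct colours require distinct cells, we obtain $|C|-1\le l(p+q-l-1)$, which is precisely $\exc(\gamma)\ge0$. For part~3, if $l=\frq(M)$ then every colour has frequency at least $l$; summing frequencies over $C$ counts each of the $pq$ cells once, so $pq=\sum_{\delta\in C}\frq(\delta)\ge l\,|C|$, giving $|C|\le pq/l$ and hence, $|C|$ being an integer, $|C|\le\lfloor pq/l\rfloor$.

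The only step requiring genuine care is the cell count in part~2: one must check that $\gamma$'s $l$ positions all lie inside $S$ and that no cell of $S$ away from these positions is coloured $\gamma$, so that exactly $l(p+q-l-1)$ cells remain available for the other colours. The two surrounding inequalities, and both parts~1 and~3, are then immediate.
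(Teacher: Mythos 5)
Your proof is correct, and it is the standard argument: the paper itself only cites \cite{Ho1} for this lemma, but your line/cell count for part~2 is exactly the computation underlying the paper's stated interpretation of $\exc(\gamma)$ as the number of deletable entries, and parts~1 and~3 are the same immediate frequency counts. Nothing to add.
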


\begin{lemma}[see \cite{Ho1}]\label{exc}
If $p,q\in[1,\infty)$, $C$ is a finite set and $M\in\mathcal M(p,q,C)$, then $\exc(M)=\exc(\gamma)$, where $\gamma\in C$ satisfies $\frq(\gamma)=\frq(M)$. $\qed$
\end{lemma}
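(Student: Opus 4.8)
The plan is to reduce the whole statement to a one-variable monotonicity fact about a quadratic. The key observation is that the excess of a colour depends on that colour only through its frequency: if $\gamma\in C_l$ then $\exc(\gamma)=f(l)-(|C|-1)$, where I set $f(l)=l(p+q-1-l)$. Since $|C|-1$ is the same constant for every colour, minimizing $\exc(\gamma)$ over $\gamma\in C$ amounts exactly to minimizing $f(\frq(\gamma))$. Thus it suffices to show that $f$ is minimized over the attained frequencies precisely at the smallest one, namely $\frq(M)$; that will identify the minimizing colour and, because $\exc$ depends only on frequency, show all colours of that frequency share the common value $\exc(M)$.

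Next I would record that $f(l)=-l^2+(p+q-1)l$ is a downward-opening quadratic in $l$, so I cannot simply claim it is increasing over the whole frequency range: when $\min(p,q)$ is close to $(p+q)/2$ the largest frequencies may lie beyond the vertex $l=(p+q-1)/2$. Instead I would compare frequencies pairwise. For integers $a<b$ a direct manipulation gives, with $s=p+q-1$,
\[
f(a)\le f(b)\iff s(a-b)\le (a-b)(a+b)\iff a+b\le s,
\]
the last equivalence because dividing by $a-b<0$ reverses the inequality.

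Now I would invoke Lemma~\ref{bgen}.1, which guarantees that every frequency satisfies $l\le\min(p,q)$. Assuming without loss of generality $p\le q$, any two distinct attained frequencies $a<b$ obey $a\le p-1$ and $b\le p$, hence $a+b\le 2p-1\le p+q-1=s$. By the criterion above this yields $f(a)\le f(b)$: the smaller frequency always gives the smaller (or equal) value of $f$, and therefore the smaller excess. Consequently the minimum of $\exc$ over $C$ is attained at any colour of minimum frequency $\frq(M)$, which is exactly the asserted equality $\exc(M)=\exc(\gamma)$ for $\gamma$ with $\frq(\gamma)=\frq(M)$.

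There is no genuine obstacle here; the single point requiring care is the non-monotonicity of $f$ near the right end of the frequency range, and the pairwise comparison $a+b\le p+q-1$ disposes of it cleanly by exploiting the symmetry of the parabola about $l=(p+q-1)/2$ (so that, e.g., the two largest possible frequencies $p-1$ and $p$ when $p=q$ give equal values of $f$, and neither beats $\frq(M)$).
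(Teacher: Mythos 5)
Your argument is correct. The paper itself does not reprove Lemma~\ref{exc} --- it is quoted from \cite{Ho1} with only a $\qed$ --- so there is no in-text proof to compare against, but your reduction is the natural one: $\exc(\gamma)$ depends on $\gamma$ only through $f(l)=l(p+q-1-l)$ with $l=\frq(\gamma)$, and your pairwise criterion $f(a)\le f(b)\iff a+b\le p+q-1$ for $a<b$, combined with Lemma~\ref{bgen}.1 (so that $a+b\le 2\min(p,q)-1\le p+q-1$), correctly shows the minimum of the excess is attained at minimum frequency. Your caution about the parabola's vertex is well placed (when $p=q$ the two largest admissible frequencies give equal values of $f$, so $f$ is only weakly increasing there), and the comparison handles that boundary case cleanly; I see no gap.
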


\begin{lemma}[see \cite{Ho1}]\label{+m}
If $q\in[7,\infty)$, $s\in[0,7]$, $C$ is a set of cardinality $2q+s$ and $M\in\mathcal M(6,q,C)$, then the following hold:

$1.$ $c_1=0$;

$2.$ $c_l=0$ for $l\in[7,\infty)$;

$3.$ $c_2\ge3s$;

$4.$ $c_{3+}\le2q-2s$;

$5.$ $\sum_{i=3}^6 ic_i\le6q-6s$;

$6.$ $\frq(M)=2$;

$7.$ $\exc(M)=7-s$;

$8.$ $c_{4+}\le c_2-3s$;

$9.$ if $\{i,k\}\in\binom{[1,6]}{2}$, then $r(i,k)\le8-s$. \qed
\end{lemma}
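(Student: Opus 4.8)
The backbone of everything is the excess formula specialised to $p=6$, $|C|=2q+s$: a colour $\gamma\in C_l$ has $\exc(\gamma)=l(q+5-l)-(2q+s-1)$, so for $l=1,\dots,6$ the excesses are $5-q-s,\ 7-s,\ q+7-s,\ 2q+5-s,\ 3q+1-s,\ 4q-5-s$, a strictly increasing sequence on $[1,6]$ since $q\ge7$. Parts~1 and~2 fall out at once. An $l$-colour with $l=1$ would have $\exc(\gamma)=5-q-s\le-2<0$, contradicting Lemma~\ref{bgen}.2; hence $c_1=0$. Part~2 is just Lemma~\ref{bgen}.1, which gives $l\le\min(6,q)=6$, so $c_l=0$ for $l\ge7$.

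Parts~3,~4,~5 and~8 I would get purely by counting, with no further use of the excess. Having $c_1=0=c_{7+}$, the identities $\sum_{l=2}^{6}c_l=|C|=2q+s$ and $\sum_{l=2}^{6}lc_l=6q$ hold; subtracting twice the first from the second gives the master identity $c_3+2c_4+3c_5+4c_6=2q-2s$. From it $c_{3+}=c_3+c_4+c_5+c_6\le c_3+2c_4+3c_5+4c_6=2q-2s$, which is Part~4; Part~3 is then $c_2=|C|-c_{3+}\ge(2q+s)-(2q-2s)=3s$; Part~5 is $\sum_{i=3}^{6}ic_i=6q-2c_2\le6q-6s$; and Part~8 follows from $c_2-3s=(c_3+2c_4+3c_5+4c_6)-(c_3+c_4+c_5+c_6)=c_4+2c_5+3c_6\ge c_{4+}$.

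For Part~6, $c_1=0$ already gives $\frq(M)\ge2$, so I only need a $2$-colour to exist. When $s\ge1$ this is immediate from Part~3, since $c_2\ge3s\ge3>0$. The alternative $\frq(M)\ge3$ means every colour has frequency at least $3$, whence $6q=\sum_\gamma\frq(\gamma)\ge3|C|=6q+3s$ (equivalently Lemma~\ref{bgen}.3 with $l=3$), forcing $s=0$ and, by the master identity, $c_4=c_5=c_6=0$ and $c_3=2q$; disposing of this uniform frequency-$3$ configuration is the \emph{main obstacle} and I would handle it as a separate boundary case. Granting $\frq(M)=2$, Part~7 is immediate: by Lemma~\ref{exc}, $\exc(M)=\exc(\gamma)$ for a minimum-frequency (i.e.\ $2$-)colour $\gamma$, and $\exc(\gamma)=2(q+3)-(2q+s-1)=7-s$.

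Part~9 I would derive from the deletion interpretation of the excess recorded after Lemma~\ref{bgen}. If $r(i,k)=0$ the bound is trivial (recall $s\le7$), so fix $\gamma\in\ro(i,k)$: it is a $2$-colour with one cell in row $i$ and one in row $k$, and $\exc(\gamma)=7-s$. The cross of $\gamma$ consists of all of rows $i,k$ (the $2q-2$ cells other than $\gamma$'s two) together with four further cells from each of $\gamma$'s two columns. Every other colour $\delta\in\ro(i,k)$ is again a $2$-colour whose \emph{both} cells lie in rows $i$ and $k$, so $\delta$ occurs twice in the cross of $\gamma$ and therefore contributes at least one deletable (redundant) cell to $\exc(\gamma)$. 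Hence $7-s=\exc(\gamma)\ge r(i,k)-1$, that is $r(i,k)\le 8-s$. In short, everything but Part~6 reduces to the excess formula, the two linear counting identities, and the local excess bookkeeping for a single $2$-colour; the one genuinely delicate point is excluding the all-frequency-$3$ colouring in Part~6 when $s=0$.
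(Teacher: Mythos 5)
Your parts 1--5, 8 and 9 are correct, and since this paper contains no proof of Lemma~\ref{+m} at all (it is quoted from \cite{Ho1} without proof), they can only be measured against the standard argument --- which is exactly what you give: the excess formula specialised to $p=6$ and $|C|=2q+s$, the two counting identities $\sum_l c_l=|C|$ and $\sum_l lc_l=6q$, and, for part 9, the bookkeeping $\exc(\gamma)\ge r(i,k)-1$ for a $2$-colour $\gamma\in\ro(i)\cap\ro(k)$; this last device is precisely how the paper itself argues inside Lemma~\ref{not7} and Claim~\ref{ij}.

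The genuine problem is the point you flagged yourself: the case $s=0$ of part 6. That gap cannot be closed, because for $s=0$ parts 6 and 7 are \emph{false}, and a counterexample sits in this very paper. By Lemma~\ref{39}, $N_r\in\mathcal M(6,r,V_r\cup W_r)$ for $r\in[3,9]$; taking $r\in[7,9]$ gives $q=r\ge7$ and $|C|=2r=2q+0$, so the hypotheses of the lemma hold with $s=0$, yet every colour of $N_r$ has frequency exactly $3$ (each $v_r(k)$ occurs once in each of rows 1--3, each $w_r(k)$ once in each of rows 4--6). Hence $\frq(N_r)=3\ne2$, and by Lemma~\ref{exc}, $\exc(N_r)=3(q+2)-(2q-1)=q+7\ne7-s$. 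So the ``uniform frequency-3 configuration'' you isolated as the main obstacle genuinely exists; parts 6--7 are provable only for $s\in[1,7]$, where your one-line count ($\frq(M)\ge3$ would give $6q\ge3|C|=6q+3s$, i.e.\ $s\le0$) together with $c_1=0$ finishes the proof. Presumably the statement in \cite{Ho1} carries such a restriction, or its transcription here is slightly off. Nothing downstream is affected, since the present paper only ever invokes the lemma with $s\in\{5,6,7\}$; and your parts 1--5, 8, 9 remain valid for all $s\in[0,7]$ (as they must: $N_r$ satisfies them, with parts 5 and 8 holding with equality).
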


\begin{lemma}\label{not7} 
If $q\in[7,\infty)$, then $\achr(K_6\square K_q)\le2q+6$.
\end{lemma}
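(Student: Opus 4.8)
The plan is to prove $\achr(K_6\square K_q)\le 2q+6$ by contradiction, assuming a colouring with more colours exists and deriving a violation of one of the structural constraints in Lemma~\ref{+m}. Suppose $C$ is a set with $|C|=2q+7$ and $M\in\mathcal M(6,q,C)$; this corresponds to the boundary case $s=7$ in Lemma~\ref{+m}. The strategy is to extract everything Lemma~\ref{+m} tells us when $s=7$ and show the resulting numerical bounds are mutually inconsistent. First I would record the immediate consequences: by parts~$1$ and $2$ we have $c_1=0$ and $c_l=0$ for $l\ge 7$, so all colours have frequency in $[2,6]$; by part~$6$, $\frq(M)=2$, so $c_2\ge 1$; and by part~$7$, $\exc(M)=7-s=0$.

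The crux is that $\exc(M)=0$ forces a $2$-colour $\gamma$ to be extremely tight: by Lemma~\ref{exc} there is a colour $\gamma$ with $\frq(\gamma)=2$ and $\exc(\gamma)=0$, meaning $2(6+q-2-1)-(|C|-1)=0$, i.e. $2(q+3)=|C|=2q+7$, which gives $2q+6=2q+7$, a contradiction. So the main line of attack is simply to compute the excess of a minimum-frequency colour directly from the definition $\exc(\gamma)=l(p+q-l-1)-(|C|-1)$ with $p=6$, $l=2$, and $|C|=2q+7$, and observe that it comes out negative, contradicting part~$2$ of Lemma~\ref{bgen} (excess is always nonnegative). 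This is cleaner than routing through the derived counting inequalities (parts~$3$--$5$, $8$--$9$), though those would give an alternative contradiction if one preferred a counting argument.

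The key steps, in order, are: (i) assume for contradiction that $\achr(K_6\square K_q)\ge 2q+7$, so that by Proposition~\ref{general} there exists $M\in\mathcal M(6,q,C)$ with $|C|=2q+7$; (ii) invoke Lemma~\ref{+m} part~$6$ to get $\frq(M)=2$, so there is a colour $\gamma$ with $\frq(\gamma)=2$; (iii) compute $\exc(\gamma)=2(6+q-2-1)-((2q+7)-1)=2(q+3)-(2q+6)=-0$, and more carefully track that the intended contradiction arises because Lemma~\ref{+m} part~$7$ asserts $\exc(M)=7-s<0$ once $|C|$ exceeds $2q+7$; (iv) conclude the impossibility. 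I would actually phrase it so that $|C|=2q+7$ gives $s=7$ and $\exc(M)=0$ is consistent, then push to $|C|=2q+8$ (i.e. $s=8$, outside the range of Lemma~\ref{+m}) where $\exc$ becomes negative — so the genuinely careful version must confront the boundary.

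The main obstacle is precisely this boundary subtlety: at $|C|=2q+7$ the excess of a $2$-colour is exactly $0$, not negative, so Lemma~\ref{bgen} part~$2$ alone does not immediately forbid it. The real work is to rule out the tight case $|C|=2q+7$ itself. Here I expect to lean on part~$3$ together with part~$8$ of Lemma~\ref{+m}: with $s=7$ these read $c_2\ge 21$ and $c_{4+}\le c_2-21$, and combined with the frequency-sum bound in part~$5$, namely $\sum_{i=3}^6 i\,c_i\le 6q-42$, one should be able to show the total count $\sum_{l=2}^6 c_l=|C|=2q+7$ cannot be achieved; the arithmetic of reconciling a forced large $c_2$ against the limited ``budget'' $6q-42$ for higher-frequency colours is where the contradiction must be squeezed out. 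Thus the hard part is the careful counting at the extremal value $s=7$, rather than the excess computation, which handles all $|C|\ge 2q+8$ at once.
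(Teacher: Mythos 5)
Your setup is correct (reduce via Theorem~\ref{ip} and Proposition~\ref{general} to ruling out $M\in\mathcal M(6,q,C)$ with $|C|=2q+7$, i.e.\ $s=7$, whence $\exc(M)=0$ and $c_2\ge21$), and you rightly notice that the bare excess computation gives $\exc(\gamma)=0$ rather than a negative number, so the case $|C|=2q+7$ must be killed by other means. But the closing argument you sketch does not close. The numerical constraints you propose to play off against each other --- $c_2\ge21$, $c_{4+}\le c_2-21$ and $\sum_{i=3}^6 ic_i\le 6q-42$ --- are simultaneously satisfiable together with $|C|=2q+7$: take $c_2=21$, $c_3=2q-14$, $c_4=c_5=c_6=0$; then $\sum_l lc_l=42+3(2q-14)=6q$ and every inequality you cite holds. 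This is no accident: those inequalities are precisely the conclusions that Lemma~\ref{+m} guarantees \emph{for} a hypothetical matrix with $s=7$, so a purely arithmetic reconciliation of global colour counts cannot refute its existence. The missing ingredient is structural, not numerical.

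What the paper actually does is a two-stage pigeonhole on the distribution of $2$-colours over rows: from $2c_2=\sum_{i=1}^6 r_2(i)\ge42$ some row, say row $1$, has $r_2(1)\ge7$; since $r_2(1)=\sum_{i=2}^6 r(1,i)$, some $i$ has $r(1,i)\ge2$, i.e.\ two distinct $2$-colours whose copies occupy the same pair of rows. Each such colour contributes $1$ to the excess of the other, so $\exc(\gamma)\ge r(1,i)-1\ge1$ for $\gamma\in\ro(1,i)$, contradicting $\exc(M)=0$. Alternatively, part $9$ of Lemma~\ref{+m} --- which you mention in passing but never deploy --- finishes even faster: with $s=7$ it gives $r(i,k)\le1$ for every $\{i,k\}\in\binom{[1,6]}{2}$, hence $r_2(i)\le5$ for each $i$ and $c_2\le15$, contradicting $c_2\ge21$. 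Either way you must bring in information about how $2$-colours are spread over \emph{pairs} of rows; the global counts alone are not enough. (Your digression about $|C|=2q+8$ is moot: by Theorem~\ref{ip} it suffices to exclude $|C|=2q+7$ exactly, which is the case you correctly identified as the hard one.)
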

\begin{proof}
If our lemma is false, according to Theorem~\ref{ip} and Proposition~\ref{general} there is a $(2q+7)$-element set $C$ and $M\in\mathcal M(6,q,C)$. By Lemma~\ref{+m}.3 and \ref{+m}.7 then $c_2\ge3\cdot7=21$ and $\exc(M)=0$. Further, by Proposition~\ref{wlog} we may suppose without loss of generality that $r_2(1)\ge r_2(i)$ for $i\in[2,6]$, which implies $6r_2(1)\ge\sum_{i=1}^6 r_2(i)=2c_2$ and $r_2(1)\ge\lceil\frac{2c_2}6\rceil\ge7$; we shall use on similar occasions (w) to indicate that it is Proposition~\ref{wlog}, which is behind the fact that the generality is not lost. As a consequence there is $i\in[2,6]$ such that $r(1,i)\ge\lceil\frac{r_2(1)}5\rceil\ge2$. With $\gamma\in\ro(1,i)$ then each colour of $\ro(1,i)\setminus\{\gamma\}$ contributes one to the excess of $\gamma$, hence we have $0=\exc(M)=\exc(\gamma)\ge r(1,i)-1\ge1$, a contradiction.
\end{proof}

\section{Solution}\label{sol}

It turns out that the matrix constructions given by Propositions~\ref{468} and \ref{915}--\ref{1640} are optimum from the point of view of $\achr(K_6\square K_q)$. The optimality was already known for $q=4$ (Hor\v n\'ak and Puntig\'an~\cite{HoPu}) and $q=6$ (Bouchet~\cite{B}), while the rest of the present paper is devoted to the analysis of remaining $q$'s.

\begin{theorem}\label{K6K8}
$\achr(K_6\square K_8)=21$.
\end{theorem}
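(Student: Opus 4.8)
The lower bound $\achr(K_6\square K_8)\ge21$ is already furnished by the matrix $M_8$ of Proposition~\ref{468}, so the entire task is the matching upper bound. By Lemma~\ref{not7} we have $\achr(K_6\square K_8)\le22$; hence, invoking Theorem~\ref{ip} and Proposition~\ref{general}, it suffices to show that no $M\in\mathcal M(6,8,C)$ with $|C|=22$ exists. I would assume such an $M$ and apply Lemma~\ref{+m} with $q=8$, $s=6$, obtaining $c_1=0$, $c_l=0$ for $l\ge7$, $\frq(M)=2$, $\exc(M)=1$, $c_2\ge18$, $c_{3+}\le4$, $c_{4+}\le c_2-18$, and $r(i,k)\le2$ for every row pair. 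Combining the entry count $\sum_l lc_l=48$ with $|C|=\sum_l c_l=22$ collapses everything to the single relation $c_3+2c_4+3c_5+4c_6=4$, which under $c_{3+}+c_{4+}\le4$ admits only the five profiles $(c_2,c_3,c_4,c_5,c_6)\in\{(18,4,0,0,0),(19,2,1,0,0),(20,0,2,0,0),(20,1,0,1,0),(21,0,0,0,1)\}$. The proof then refutes each profile.

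The engine of the refutation is the condition $\exc(M)=1$ read off a single $2$-colour. If $\gamma\in C_2$ occupies $(i,m)$ and $(k,n)$, then the $l(p+q-l-1)=22$ non-$\gamma$ cells lying in rows $i,k$ and columns $m,n$ (its \emph{cross}) must host all $21$ other colours; as $22-21=\exc(\gamma)=1$, exactly one colour $\mathrm{rep}(\gamma)$ occurs twice in the cross and each remaining colour occurs once. From this I would first reprove the column analogue of Lemma~\ref{+m}.9, namely $c(m,n)\le2$ (a third $2$-colour of $\co(m,n)$ would give $\gamma$ a second repeat), and then the key dichotomy: for each $\gamma$ at most one of $r(i,k)=2$, $c(m,n)=2$ can hold, and when $r(i,k)=2$ (resp. $c(m,n)=2$) the colour $\mathrm{rep}(\gamma)$ is forced to be the second colour of $\ro(i,k)$ (resp. $\co(m,n)$), so that the two colours of such a saturated line are mutual repeats. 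A short check shows the same mutuality for ``mixed'' partners, those sharing exactly one row and one column with $\gamma$. Consequently the $2$-colours split into mutually paired ones (row-saturated, column-saturated, or mixed) and those few whose repeat has frequency $\ge3$.

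The contradiction should come from confronting three scarce resources. The row totals $\sum_{\{i,k\}}r(i,k)=c_2$ over the $\binom{6}{2}=15$ pairs, each capped at $2$, force many saturated row pairs once $c_2$ is large; symmetrically $\sum_{\{m,n\}}c(m,n)=c_2$ governs the $\binom{8}{2}=28$ column pairs; and the colours of frequency $\ge3$ occupy only $48-2c_2\le12$ cells, so they can serve as repeats for boundedly many $2$-colours (a frequency-$l$ colour meets only $l$ rows and $l$ columns, sharply limiting the crosses in which it can land twice). Writing $c_2=2\mu+e$, with $\mu$ the number of mutual $2$-colour pairs and $e$ the number of $2$-colours whose repeat has frequency $\ge3$, I would bound $e$ from the geometry of the $\ge3$-colours and $\mu$ from the saturated-line counts, aiming to show these totals cannot simultaneously reach the required $c_2\in\{18,19,20,21\}$ while the placement of the high-frequency colours still respects $r(i,k)\le2$ and $c(m,n)\le2$.

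The main obstacle is the densest profiles $c_2=20$ and $c_2=21$: there $\sum_{\{i,k\}}r(i,k)$ sits just below its ceiling $30$, so nearly every row pair is saturated and the $2$-colours almost tile the $6\times8$ array, leaving the handful of $\ge3$-cells extremely rigid. Disproving that such an almost-resolvable configuration embeds in a $6\times8$ grid—reconciling the row, column, and excess constraints at once—is where the genuine combinatorial work lies, and I expect it to require an essentially exhaustive geometric case analysis rather than the clean global counts that dispatch the sparser profiles.
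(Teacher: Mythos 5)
Your framework is set up correctly (the reduction to refuting $\mathcal M(6,8,C)$ with $|C|=22$, the parameters from Lemma~\ref{+m} with $s=6$, the $22$-cell cross of a $2$-colour together with the consequence that exactly one colour repeats in it, and the bound $c(m,n)\le2$), but there are two problems. First, your ``key dichotomy'' is false: both $r(i,k)=2$ and $c(m,n)=2$ can hold for the same $2$-colour $\gamma$ sitting at $(i,m)$ and $(k,n)$ --- this happens precisely when the second colour of $\ro(i,k)$ coincides with the second colour of $\co(m,n)$, i.e.\ when the two colours form an $\x$-configuration on the corners $(i,m),(i,n),(k,m),(k,n)$, in which case the single permitted repeat in the cross is consumed by that one partner and $\exc(\gamma)=1$ is still attainable. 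Far from being excluded, this is the situation the paper proves is \emph{forced} (Claim~\ref{r22}: distinct $\alpha,\beta\in\ro(i,k)$ satisfy $\cov(\alpha,\beta)=2$), and the whole endgame of the paper's proof is built on these $\x$-configurations; so the pairing structure you erect on top of the dichotomy does not stand as stated.

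Second, and more seriously, the proof is not actually carried out. Enumerating the frequency profiles $(c_2,\dots,c_6)$ is the easy part; the content of the theorem is refuting them, and there you offer only a heuristic (``three scarce resources'') plus an explicit admission that the dense profiles require ``an essentially exhaustive geometric case analysis'' which you do not perform. For comparison, the paper never enumerates profiles: it proves $r_2(i)=6$ for every row $i$ by a good-pair count (Claim~\ref{r26}), which at once gives $c_2=18$, $c_3=4$, $c_{4+}=0$ --- i.e.\ eliminates all profiles but one --- then shows each row has a unique partner $k$ with $r(i,k)=2$ (Claim~\ref{K8f}), so the saturated pairs form a perfect matching on the six rows, and derives the final contradiction by counting the columns covered by the three resulting $\x$-configurations: this number must be $4$ or $5$, the value $5$ failing because only $\binom{4}{2}+1=7$ of the required $\binom{6}{2}=15$ pairs inside $A=\ro(1,2)\cup\ro(3,4)\cup\ro(5,6)$ can be good, and the value $4$ failing because at least three of the four $3$-colours are then forced into a single uncovered column, giving some $2$-colour there excess at least $2$. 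None of these steps is present or clearly derivable from your outline, so as it stands your argument establishes only the already-known bounds $21\le\achr(K_6\square K_8)\le22$.
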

\begin{proof} By Proposition~\ref{468} and Lemma~\ref{not7} we know that $21\le\achr(K_6\square K_8)\le22$. Suppose that $\achr(K_6\square K_8)=22$; because of Proposition~\ref{general} there is a 22-element set of colours $C$ and a matrix $M\in\mathcal M(6,8,C)$. With $q=8$ and $s=6$ Lemma~\ref{+m} yields $c_2\ge18$, $c_{3+}\le4$, $\frq(M)=2$, $\exc(M)=1$, $c_{4+}\le c_2-18$ and $r(i,k)\le2$ for $\{i,k\}\in\binom{[1,6]}2$. We are going to strengthen step by step the requirements on $M$ to finally reach a conclusion that $M$ cannot exist at all.

\begin{claim}\label{ij}
If $i\in[1,6]$ and $j\in[1,8]$, then $|\ro_2(i)\cap\co(j)|\le2$.
\end{claim}

\bp Suppose that $|C_2'|\ge3$ for $C_2'=\ro_2(i)\cap\co(j)$. If $\alpha=(M)_{i,j}\in C_2$, then each colour of $C_2'\setminus\{\alpha\}$ contributes one to the excess of $\alpha$ so that, by Lemma~\ref{exc}, $1=\exc(M)=\exc(\alpha)\ge|C_2'\setminus\{\alpha\}|\ge2$, a contradiction. On the other hand, if $\alpha\in C_{3+}$, then for any $\beta\in C_2'$ we have $1=\exc(M)=\exc(\beta)\ge|C_2'\setminus\{\beta\}|\ge2$, a contradiction again. $\ep$

\begin{claim}\label{r22}
If $\{i,k\}\in\binom{[1,6]}2$ and $\alpha,\beta\in\ro_2(i,k)$, $\alpha\ne\beta$, then $\cov(\alpha,\beta)=2$, so that $\{\alpha,\beta\}\subseteq\co(j)$ for both $j\in\Cov(\alpha,\beta)$.
\end{claim}
 
\bp Suppose (w) $i=1$, $k=2$ and $(M)_{1,1}=(M)_{2,2}=\alpha$. If $\cov(\alpha,\beta)=4$, (w) $(M)_{1,3}=(M)_{2,4}=\beta$. Denote $A=\ro(1)\cup\ro(2)$. From $\exc(\alpha)=\exc(\beta)=1$ it is clear that $|A|=14$, $|C\setminus A|=8$, and that, for both $l\in[1,2]$, each colour of $C\setminus A$ occupies a position in the set $[3,6]\times[2l-1,2l]$ (the colouring $f_M$ is complete). Since $|(C\setminus A)\cap C_2|=|C\setminus A|-|(C\setminus A)\cap C_{3+}|\ge8-c_{3+}\ge4$, there is a colour $\gamma\in(C\setminus A)\cap C_2$. The neighbourhood of the 2-element vertex set $f_M^{-1}(\gamma)$ contains 10 vertices belonging to $[3,6]\times[1,4]$, all coloured with 7 colours of $(C\setminus A)\setminus\{\gamma\}$. Thus 3 colours of $(C\setminus A)\setminus\{\gamma\}$ appear in the above neighbourhood twice. As a consequence we obtain $1=\exc(M)=\exc(\gamma)\ge10-7=3$, a contradiction. 

If $\cov(\alpha,\beta)=3$, (w) $(M)_{1,2}=(M)_{2,3}=\beta$. With $B=\ro(1)\cup\ro(2)\cup\co(2)$ then $\exc(\alpha)=1$ implies $|B|=18$ and $|C\setminus B|=4$.
Each colour $\gamma\in C\setminus B$ belongs to $\co(1)\cap\co(3)$ (both pairs $\{\gamma,\alpha\}$ and $\{\gamma,\beta\}$ are good in $M$) and satisfies $\exc(\gamma)\ge|(C\setminus B)\setminus\{\gamma\}|=3$, hence $\gamma\notin C_2$ and $\gamma\in C_{3+}$.  Consequently, $c_{3+}\ge4$, $c_{3+}=4$, $C\setminus B=C_{3+}$, $B=C_2$, $\delta=(M)_{1,3}\in C_2$, and the second copy of $\delta$ appears in $[3,6]\times[4,8]$ so that $\exc(\delta)\ge2$ (if $\delta=(M)_{m,n}$ with $(m,n)\in[3,6]\times[4,8]$, then both $\beta$ and $(M)_{m,1}\in C_{3+}$ contribute to the excess of $\delta$), a contradiction. $\ep$
\vskip1mm

If $\Cov(\alpha,\beta)=\{j,l\}$ for the 2-colours $\alpha,\beta$ of Claim~\ref{r22}, then (w) $\alpha=(M)_{i,j}=(M)_{k,l}$ and $\beta=(M)_{i,l}=(M)_{k,j}$; we say that the set of 2-colours $\{\alpha,\beta\}$ forms an \textit{$\x$-configuration} (in $M$): both copies of a colour $\gamma\in\{\alpha,\beta\}$ are diagonal to each other in the ``rectangle'' of the matrix $M$ with corners $(i,j),(i,l),(k,j),(k,l)$, and this fact will be in the sequel for simplicity denoted by $\{\alpha,\beta\}\to\x$.

\begin{claim}\label{r26}
If $i\in[1,6]$, then $r_2(i)=6$ and $r_3(i)=2$.
\end{claim}

\bp Let (w) $r_2(1)\ge r_2(i)$ for each $i\in[2,6]$ and $r(1,i)\ge r(1,i+1)$ for each $i\in[2,5]$. Suppose that $r_2(1)\ge7$. Then $2\ge r(1,2)\ge\lceil\frac{r_2(1)}5\rceil=2$ so that $r(1,2)=2$. Moreover, $r(1,6)\le\lfloor\frac{r_2(1)}5\rfloor=1$, and there is $p\in[2,5]$ such that $r(1,p)=2$ and $r(1,p+1)<2$. As $r_2(6)=8-r_{3+}(6)\ge8-c_{3+}\ge4$, we have $|\ro_2(6)\setminus\ro_2(1)|=|\ro_2(6)\setminus\ro(1,6)|=r_2(6)-r(1,6)=(8-r_{3+}(6))-r(1,6)\ge4-1=3$, and there exists $\alpha\in\ro_2(6)\setminus\ro_2(1)$.

Consider a colour $\beta\in\ro(i,k)$, where $1<i<k$. When counting the number of pairs $\{\beta,\gamma\}$ with $\gamma\in\ro_2(1)$, that are good in $M$ because of the copy of $\beta$ in the $m$th row of $M$, $m\in\{i,k\}$, we see that $r(1,m)$ of them are row-based, and, by Claim~\ref{ij}, at most two of them are column-based.
There is $i\in[2,5]$ such that $\alpha\in\ro(i,6)$. Proceeding as above we obtain that the number of pairs $\{\alpha,\gamma\}$ with $\gamma\in\ro_2(1)$, that are good in $M$, is at most $\rho=[r(1,i)+2]+[r(1,6)+2]\le6+r(1,6)$. Observe that we cannot have $r(1,6)\le r_2(1)-7$, because then $\rho\le6+[r_2(1)-7]<r_2(1)$, a contradiction. Therefore, $1\ge r(1,6)\ge r_2(1)-6\ge1$, $r(1,6)=1$ and $r_2(1)=7$. Now $p\le3$, because $p\ge4$ would mean $7=r_2(1)=\sum_{i=2}^6r(1,i)\ge2(p-1)+(6-p)=p+4\ge8$, a contradiction. Consequently, any colour $\delta\in C_2\setminus\ro(1)$ needs a copy in a row $k\in[2,p]$ (if both copies of $\delta$ are only in rows numbered from $p+1$ to 6, the number of pairs $\{\delta,\gamma\}$, $\gamma\in\ro_2(1)$, that are good in $M$,
is at most $2(1+2)<r_2(1)$, which is impossible), hence $18\le c_2\le7+\sum_{i=2}^p(r_2(i)-r(1,i))\le7+(p-1)(7-2)=5p+2\le17$, a contradiction. 

Since the assumption $r_2(1)\ge7$ was false, we have $36\ge\sum_{i=1}^6r_2(i)=2c_2\ge36$. Therefore, $r_2(i)=6$ for each $i\in[1,6]$, $c_2=18$, $c_{4+}=0$, $C=C_2\cup C_3$, and the proof follows. $\ep$

By Claim~\ref{r26}, for every $i\in[1,6]$ there is (at least one) $p_i\in[1,6]\setminus\{i\}$ such that $r(i,p_i)=\lceil\frac65\rceil=2$. Then, by Claim~\ref{r22}, $\ro(i,p_i)\to\x$ for $i\in[1,6]$. Let $\tilde C_2=\{\alpha,\beta,\gamma,\delta\}\subseteq C_2$ be such that $\{\alpha,\beta\}\to\x$ and $\{\gamma,\delta\}\to\x$, where $\{\alpha,\beta\}\ne\{\gamma,\delta\}$ (which immediately yields $\{\alpha,\beta\}\cap\{\gamma,\delta\}=\emptyset$). We have $\cov(\tilde C_2)\in[3,4]$, since with $\cov(\tilde C_2)=2$ each of $\beta,\gamma,\delta$ contributes one to the excess of $\alpha$ so that $\exc(\alpha)\ge3$, a contradiction. 

Thus (w) $\cov(\alpha,\beta)=[1,2]$ and $\cov(\gamma,\delta)=[l,l+1]$, where $l\in[2,3]$. Then $\co(1)\cup\co(2)\subseteq C_2$ (because of $r_3(1)=r_3(2)=2$ and $\exc(\alpha)=1$, colours of $C_{3}$ occupy four positions in $\ro(1)\cup\ro(2)$ and neither position in $\co(1)\cup\co(2)$), and, similarly, $\co(l)\cup\co(l+1)\subseteq C_2$. So, all 3-colours appear exclusively in $7-l$ columns of $M$ numbered from $l+2$ to 8. There is $j\in[l+2,8]$ such that $c_3(j)\ge\lceil\frac{3c_3}{7-l}\rceil\ge\lceil\frac{12}{4}\rceil=3$, while $c_2(j)\ge6-c_3=2$. If $\varepsilon\in\co_2(j)\cap\ro(m,n)$, then 3-colours occupy at least $r_3(m)+r_3(n)+[c_3(j)-1]=c_3(j)+3\ge6$ positions in $N(V_{\varepsilon})$ (since at least $c_3(j)-1$ colours of $\co_3(j)$ are not in $\ro(m)\cup\ro(n)$), hence $\exc(\varepsilon)\ge6-c_3=2$, a final contradiction for the proof of Theorem~\ref{K6K8}.
\end{proof}

\begin{theorem}\label{915f}
If $q\in[9,15]$, then $\achr(K_6\square K_q)=2q+6$.
\end{theorem}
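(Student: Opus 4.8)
\section*{Proof proposal for Theorem~\ref{915f}}

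The plan is to observe that the two matching bounds needed have already been established, so the theorem reduces to combining them. For the lower bound, Proposition~\ref{915} asserts precisely that $\achr(K_6\square K_q)\ge2q+6$ whenever $q\in[9,15]$, the construction there being the block matrix $M_q=(M_6\,N_{q-6})$ realising a proper complete colouring with $2q+6$ colours. For the upper bound, Lemma~\ref{not7} gives $\achr(K_6\square K_q)\le2q+6$ for all $q\in[7,\infty)$, and in particular for every $q$ in the interval $[9,15]\subseteq[7,\infty)$ relevant here. Putting the two inequalities together yields $\achr(K_6\square K_q)=2q+6$ for each $q\in[9,15]$, as claimed.

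Concretely, I would write: ``By Proposition~\ref{915} we have $\achr(K_6\square K_q)\ge2q+6$, and by Lemma~\ref{not7} we have $\achr(K_6\square K_q)\le2q+6$; hence $\achr(K_6\square K_q)=2q+6$.'' No further case analysis on the individual values $q=9,10,\dots,15$ is needed, since both cited results hold uniformly over the whole range.

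In this instance there is no genuine obstacle to overcome in the proof itself: all the difficulty has been front-loaded into the two ingredients. The real content sits in Proposition~\ref{915} (constructing an explicit member of $\mathcal M(6,q,C)$ with $|C|=2q+6$, which relies on Lemma~\ref{39} to guarantee that the mixed pairs $\alpha\in[1,18]$, $\beta\in V_{q-6}\cup W_{q-6}$ are row-based) and in Lemma~\ref{not7} (the frequency/excess argument via Lemma~\ref{+m} showing that $2q+7$ colours would force a colour of excess at least $1$ while $\exc(M)=0$). Given those, the theorem is an immediate corollary, and I would present it as such rather than attempting any independent derivation.
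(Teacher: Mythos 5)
Your proposal is correct and coincides exactly with the paper's own proof, which simply cites Proposition~\ref{915} for the lower bound and Lemma~\ref{not7} for the upper bound. Nothing further is needed.
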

\begin{proof}
See Proposition~\ref{915} and Lemma~\ref{not7}.
\end{proof}

\begin{theorem}\label{16+}
If either $q\in[42,\infty)$ and $q\equiv0\pmod2$ or $q\in[16,40]$, then $\achr(K_6\square K_q)=2q+4$.
\end{theorem}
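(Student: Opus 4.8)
The plan is to combine the constructions already in hand with a matching upper bound. For the lower bound there is nothing new to do: Proposition~\ref{16+lb} gives $\achr(K_6\square K_q)\ge 2q+4$ for even $q\ge 16$, and Proposition~\ref{1640} gives the same inequality for every $q\in[16,40]$, so both instances of the hypothesis are covered. Hence the whole content of the theorem is the reverse inequality $\achr(K_6\square K_q)\le 2q+4$. By Theorem~\ref{ip} and Proposition~\ref{general}, if this failed we could take $|C|=2q+5$ exactly; so I would assume, for contradiction, that there is $M\in\mathcal M(6,q,C)$ with $|C|=2q+5$, i.e.\ $s=5$ in Lemma~\ref{+m}. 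That lemma then supplies the working data: $\frq(M)=2$, $\exc(M)=2$, $c_2\ge 15$, $c_{3+}\le 2q-10$, $c_{4+}\le c_2-15$, and $r(i,k)\le 3$ for every $\{i,k\}\in\binom{[1,6]}2$.

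The heart of the argument is the rigidity forced by $\exc(M)=2$, exactly as in the proof of Theorem~\ref{K6K8}, but with the slack raised from $1$ to $2$. Every $2$-colour $\gamma$ has excess exactly $2$, so its cross (its two rows together with its two columns) contains all other $2q+4$ colours with only two repeated appearances. I would first convert this into local statements analogous to Claims~\ref{ij}--\ref{K8f}: a bound of the form $|\ro_2(i)\cap\co(j)|\le 3$ on the number of $2$-colours in a single row--column intersection; a description, via $\{\alpha,\beta\}\to\x$, of how two $2$-colours sharing a row-pair must sit (they are forced into $\x$-configurations up to a defect of total size $2$ permitted by the excess); and, building on these, a determination of the degree sequence $r_2(i)$ and of the multiplicities $r(i,k)$ of the multigraph $H$ on the vertex set $[1,6]$ whose edges are the $2$-colours, each $2$-colour contributing the edge formed by its two rows. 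The inequality $r(i,k)\le 3$ caps edge multiplicities, whence $r_2(i)\le 15$ and $c_2\le 45$.

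The contradiction should then come from global counts carried out within the structural cases produced above. The key local fact is that a $2$-colour is column-based good with at most the eight colours lying in its two columns but outside its two rows; in particular two $2$-colours whose row-pairs are vertex-disjoint in $H$ must share a column, and each $2$-colour can meet at most eight such disjoint-row partners. In a case where $H$ has many vertex-disjoint edge pairs this overloads the columns, since the number of pairs needing a common column exceeds $4c_2$; in the opposite, star-like cases $H$ has few disjoint pairs but then $c_2$ is small (at most $15$), the $\ge 3$-colours dominate, and the contradiction is extracted instead from the column cells they must occupy together with the excess-$2$ restriction on the few $2$-colours. Throughout, the $\ge 3$-colours are controlled by $c_{3+}\le 2q-10$ and $c_{3+}=2q+5-c_2$, which confine $c_{3+}$ to the band $2q-40\le c_{3+}\le 2q-10$; their large individual excess $\exc(\delta)=q+2$ makes them flexible, yet they still consume the column cells the $2$-colours need, and this interaction is what must be balanced.

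The main obstacle is precisely this balancing, and it is heavier than in Theorem~\ref{K6K8} for two reasons. First, raising the excess from $1$ to $2$ roughly doubles the admissible local configurations, so the analogues of Claims~\ref{r22} and \ref{K8f} branch into several subcases ($\x$-configurations with one or two defects, row-pairs of multiplicity up to $3$, and so on), and the clean ``perfect-matching'' conclusion $r(1,2)=r(3,4)=r(5,6)=2$ of the $q=8$ case is replaced by a family of near-extremal multigraphs $H$ that must all be eliminated. Second, the argument has to be uniform over a wide range of $q$: as $q$ grows the number of $\ge 3$-colours grows linearly while $c_2$ stays bounded by $45$, so the decisive count changes character between the $2$-colour-rich regime (small $q$, where $c_2$ is forced up) and the $3$-colour-rich regime (large $q$, where $c_{3+}$ is pinned near $2q$). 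Reconciling both within one contradiction, and checking the boundary values such as $q=16$ directly, is where the real work lies.
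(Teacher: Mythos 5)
Your lower bound and the contradiction setup match the paper exactly: Propositions~\ref{16+lb} and \ref{1640} cover both hypotheses, and Theorem~\ref{ip} with Proposition~\ref{general} and Lemma~\ref{+m} (case $s=5$) give $M\in\mathcal M(6,q,C)$, $|C|=2q+5$, $\exc(M)=2$, $c_2\ge15$, $r(i,k)\le3$; your excess computations ($2$ for $2$-colours, $q+2$ for $3$-colours) are also correct. From there, however, what you offer is a plan rather than a proof, and the plan lacks the one idea that actually closes the argument. The paper's decisive move is structural, not a balancing count: it sharpens $r(i,k)\le3$ to $r(i,k)\le2$ (Claim~\ref{r2}), shows the auxiliary graph $G$ of row-pairs carrying $2$-colours has $\Delta(G)=3$ and is a subgraph of $K_{3,3}$ (Claims~\ref{Dle3}, \ref{add}, \ref{K33}), and then plays off two facts about $3$-colours occupying whole row-triples: a row joined to all three rows of the opposite side forces $r(l,m,n)\ge q-9$ for its own triple (Claim~\ref{D3}), while complementary triples cannot both carry ten or more $3$-colours (Claim~\ref{r3r3}). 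Together with Claim~\ref{le1} this yields $q\le18$ (Claim~\ref{le18}), so every $q\ge19$ --- the entire range $[19,40]$ and all even $q\ge42$ --- is eliminated in one stroke, and only $q\in[16,18]$ requires the detailed endgame. Your proposal has no mechanism playing this role; your expectation that the argument must be ``uniform over a wide range of $q$,'' with a count that ``changes character'' between regimes, is precisely the difficulty the paper avoids, and you explicitly defer resolving it (``where the real work lies''), which amounts to leaving the theorem unproved.

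Moreover, the global count you do propose cannot produce a contradiction in the critical configuration. You want to overload columns using the facts that two $2$-colours with vertex-disjoint row-pairs must share a column and that each $2$-colour has at most eight such partners. But in the extremal structure the paper identifies --- $G\subseteq K_{3,3}$ with edge labels $r(i,k)\le2$ --- each edge of $K_{3,3}$ is disjoint from exactly four others, so each $2$-colour has at most $4\cdot2=8$ disjoint-row partners: your inequality is tight there, not violated. This is exactly why the paper needs the finer machinery of types, $\x$-configurations and perfect-matching weights (Claims~\ref{A}--\ref{D}, \ref{sp}, \ref{c3*}), and why, in the boundary case $q=16$, the final contradiction invokes $\achr(K_6\square K_7)=18$ from~\cite{Ho2} (otherwise a $6\times7$ submatrix realising $19$ colours would survive), on top of the claims imported from~\cite{Ho1}. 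None of these ingredients, nor any substitute for them, appears in your sketch.
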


\begin{proof}
We proceed by the way of contradiction. Since $\achr(K_6\square K_q)\ge2q+4$ (Propositions~\ref{1640} and \ref{16+lb}), the assumption $\achr(K_6\square K_q)\ge2q+5$ by Theorem~\ref{ip} and Proposition~\ref{general} means that there is a  colour set $C$ with $|C|=2q+5=2q+s$ and a matrix $M\in\mathcal M(6,q,C)$. By Lemma~\ref{+m} then $C=\bigcup_{l=2}^6C_l$, $c_2\ge15$, $c_{3+}\le2q-10$, $\sum_{i=3}^{6}ic_i\le6q-30$, $\frq(M)=2=\exc(M)$, $c_{4+}\le c_2-15$, and $\{i,k\}\in\binom{[1,6]}{2}$ implies $r(i,k)\le3$. A contradiction is reached first for $q\ge19$, then for $q\in[17,18]$, and finally for $q=16$. 

Let $G$ be an auxiliary graph $G$ associated with $M$, in which $V(G)=[1,6]$ and $\{i,k\}\in E(G)$ if and only if $r(i,k)\ge1$. 

\begin{claim}\label{Dle3}
$\Delta(G)\le3$.
\end{claim}

\bp In \cite{Ho1} it has been proved that $\Delta(G)\ge4$ implies $q\le40-5\cdot5=15$, a contradiction. $\ep$
\vskip1mm

In~\cite{Ho1} one can find also proofs of the following two claims.

\begin{claim}\label{D3}
If $\{i,j,k,l,m,n\}=[1,6]$, $r(i,l)\ge1$, $r(j,l)\ge1$ and $r(k,l)\ge1$, then $r(l,m,n)\ge q-9$. $\ep$
\end{claim}

\begin{claim}\label{r3r3}
If $\{i,j,k,l,m,n\}=[1,6]$ and $r(i,j,k)\ge1$, then $r(l,m,n)\le9$. $\ep$
\end{claim}

\begin{claim}\label{add}
If $\{i,j,k,l,m,n\}=[1,6]$, $r(i,l)\ge1$, $r(j,l)\ge1$ and $r(k,l)\ge1$, $\{a,b\}\in\binom{[1,6]}{2}$ and $r(a,b)\ge1$, then $|\{i,j,k\}\cap\{a,b\}|=1=|\{l,m,n\}\cap\{a,b\}|$.
\end{claim}
\bp The assumptions of Claim~\ref{add} imply that $r(l,m,n)\ge q-9\ge7$ (see Claim~\ref{D3}), hence there is $\alpha\in\ro(a,b)$. 

If $\{a,b\}\subseteq\{i,j,k\}$, the number of pairs $\{\alpha,\beta\}$, $\beta\in\ro(l,m,n)$, that are good in $M$ (and necessarily column-based), is at most $3\,\cov(\alpha)=6<r(l,m,n)$, a contradiction. 

On the other hand, with $\{a,b\}\subseteq\{l,m,n\}$ each colour of $\ro(l,m,n)$ contributes one to the excess of $\alpha$ so that $2=\exc(M)=\exc(\alpha)\ge r(l,m,n)\ge7$, a contradiction again. 

Therefore, $2=|\{i,j,k,l,m,n\}\cap\{a,b\}|=|\{i,j,k\}\cap\{a,b\}|+|\{l,m,n\}\cap\{a,b\}|\le1+1=2$, and then $|\{i,j,k\}\cap\{a,b\}|=1=|\{l,m,n\}\cap\{a,b\}|$. $\ep$

\begin{claim}\label{r2}
If $\{i,k\}\in\binom{[1,6]}2$, then $r(i,k)\le2$.
\end{claim}
\bp Let (w) $i=1$, $k=2$, $\Cov(\ro(1,2))=[1,n]$, and assume (for a proof by contradiction) that $r(1,2)=3$ (see Lemma~\ref{+m}.9), which implies $n\in[3,6]$. 

We are going to show that $A=C\setminus(\ro(1)\cup\ro(2))\subseteq C_{3+}$. First observe that each colour $\alpha\in A$ occupies at least two positions in $S_n=[3,6]\times[1,n]$ (all pairs $\{\alpha,\beta\}$, $\beta\in\ro(1,2)$, are good in $M$).

If $n=3$, then $2q+5=|C|=|A|+|\ro(1)\cup\ro(2)|\le\lfloor\frac{4\cdot3}2\rfloor+2q-3=2q+3$, a contradiction.

If $n=4$, then $2q+5\le\lfloor\frac{4\cdot4}2\rfloor+(2q-3)=2q+5$, $|A|=8$, and any  colour of $A$ occupies exactly two positions in $S_4$. Suppose there is a colour $\alpha\in A\cap C_2$. If a vertex $(i,j)\in S_4$ belongs to $N(V_{\alpha})$, then $(M)_{i,j}\in A\setminus\{\alpha\}$, hence $2=\exc(M)=\exc(\alpha)\ge10-|N(V_{\alpha})\setminus\{\alpha\}|=3$ (the set $N(V_{\alpha})$ has 10 vertices in $S_4$), a contradiction. Therefore, $A\subseteq C_{3+}$.

If $n=5$, there is $j\in[1,5]$ such that $|\co(j)\cap\ro(1,2)|=2$ and $|\co(l)\cap\ro(1,2)|=1$ for $l\in[1,5]\setminus\{j\}$. Then $A=A_2\cup A_3\cup A_4$, where $A_l$ consists of colours of $A$ occupying $l$ positions in $S_5$. With $a_l=|A_l|$ we obtain $a_2\le4$ (if $\alpha\in A_2\setminus\co(j)$, at least one of three pairs $\{\alpha,\beta\}$ with $\beta\in\ro(1,2)$ is not good in $M$, a contradiction), $a_3+a_4=|A|-a_2\ge8-4=4$, $16+a_3+a_4\le16+a_3+2a_4\le2(a_2+a_3+a_4)+a_3+2a_4=\sum_{l=2}^4la_l=4\cdot5=20$, $a_3+a_4\le20-16=4$, $a_3+a_4=4$, all six above expressions are 20, which implies $a_4=0$, $a_3=4=a_2$, and then all positions in $S_5$ are occupied by colours of $A_2\cup A_3$. If $\Cov(\ro(1,2)\setminus\co(j))=\{s,t\}\subseteq[1,5]\setminus\{j\}$, then $A_2\subseteq\co(j)\cup\co(s)\cup\co(t)$. For the set $B$ of colours in $C_2\setminus A_2$ that are not in $[1,2]\times[1,5]$ we have $|B|\ge c_2-[(2|[1,5]|-3)+a_2]\ge15-11=4$. However, the number of pairs $\{\alpha,\beta\}$ with $\alpha\in A_2$ and $\beta\in B$, that are good in $M$, is at most three (if $\beta\in\ro(u)$, $u\in[3,6]$, only $(M)_{u,j},(M)_{u,s}$ and $(M)_{u,t}$ are available as $\alpha$), a contradiction.

If $n=6$, the frequency of each colour in $\alpha\in A$ is at least three, since all pairs $\{\alpha,\beta\}$ with $\beta\in\ro(1,2)$ are column-based, and at most one of them satisfies the implication $\alpha\in\co(j)\Rightarrow\beta\in\co(j)$.

Thus $A\subseteq C_{3+}$ and $C_2\subseteq\ro(1)\cup\ro(2)$. For $l\in[1,6]$ let
\begin{equation*}
K(l)=\{m\in[1,6]\setminus\{l\}:r(l,m)\ge1\}
\end{equation*}
so that $\Delta(G)\le3$ (Claim~\ref{Dle3}) implies $|K(l)|\le3$.
With $r_2^-(m,3-m)=|\ro_2(m)\setminus\ro_2(3-m)|$, $m=1,2$, let $p\in[1,2]$ be such that $r_2^-(p,3-p)\ge r_2^-(3-p,p)$. Observe that $3-p\in K(p)$ and $r_2^-(p,3-p)=\sum_{l\in K(p)\setminus\{3-p\}}r(p,l)\le2\cdot3=6$. Then $15\le c_2=r_2^-(1,2)+r_2^-(2,1)+r(1,2)\le2r_2^-(p,3-p)+3\le2\cdot6+3=15$, hence $c_2=15$, $r_2^-(1,2)=r_2^-(2,1)=6$, $r_2(1)=r_2(2)=9$, $|K(p)|=3$ and $r(p,l)=3$ for each $l\in K(p)$.

Repeat the above reasoning with the pair $(p,l)$, $l\in K(p)\setminus\{3-p\}$, in the role of the pair $(1,2)$ to obtain $r_2(l)=9$ for both $l\in K(p)\setminus\{3-p\}\subseteq[3,6]$. Then, however,
\begin{equation*}
15=c_2=\frac12\sum_{l=1}^6r_2(l)\ge\frac12\left[r_2(1)+r_2(2)+\sum_{l\in K(p)\setminus\{3-p\}}r_2(l)\right]=\frac{4\cdot9}2=18,
\end{equation*}
a contradiction. $\ep$

\begin{claim}\label{r6}
If $i\in[1,6]$, then $r_2(i)\le6$.
\end{claim}
\bp Since $\Delta(G)\le3$, the claim is a direct consequence of Claim~\ref{r2}. $\ep$
\vskip1mm

\begin{claim}\label{K33}
The following statements are true:

$1.$ $\Delta(G)=3$;

$2.$ $G$ is a subgraph of $K_{3,3}$;

$3.$ $c_2\le18$.
\end{claim}

\bp 1. The assumption $\Delta(G)\le2$ would mean, by Claim~\ref{r2}, $r_2(i)\le2\cdot2=4$ for $i\in[1,6]$ and $c_2=\frac12\sum_{i=1}^6r_2(i)\le\lfloor\frac{24}2\rfloor=12$, a contradiction.

2. From Claims~\ref{K33}.1 and \ref{add}it follows that there is a partition $\{I,K\}$ of $[1,6]$ satisfying $|I|=|K|=3$ such that $r(i,k)\ge1$ with $\{i,k\}\in\binom{[1,6]}{2}$ implies $|\{i,k\}\cap I|=1=|\{i,k\}\cap K|$. Thus, $G$ is a subgraph of $K_{3,3}$ with the bipartition $\{I,K\}$.

3. Finally, by Claim~\ref{r2}, $c_2=\sum_{i\in I}\sum_{k\in K}r(i,k)\le9\cdot2=18$. $\ep$
\vskip1mm

Henceforth we suppose (w) that the bipartition of the graph $K_{3,3}$ from Claim~\ref{K33}.2 is $\{[1,3],[4,6]\}$, which leads to
\begin{equation*}
C_2=\bigcup_{i=1}^3\bigcup_{k=4}^6\ro(i,k).
\end{equation*}
Note that this assumption somehow restricts the meaning of (w) in the subsequent analysis, namely the bijection $\rho:[1,6]\to[1,6]$ in Proposition~\ref{wlog} should satisfy $\rho([1,3])\in\{[1,3],[4,6]\}$.

\begin{claim}\label{le1}
There is at most one pair $(i,k)\in[1,3]\times[4,6]$ with $r(i,k)=0$.
\end{claim}
\bp If $|\{(i,k)\in[1,3]\times[4,6]:r(i,k)=0\}|\ge2$, Claim~\ref{r2} yields $c_2\le7\cdot2=14$, a contradiction. $\ep$

\begin{claim}\label{le18}
The following statements are true:

$1.$ If $(i,j,k)\in\{(1,2,3),(4,5,6)\}$, then $q-9\le r(i,j,k)\le9$;

$2.$ $q\in[16,18]$.
\end{claim}
\bp 1. From Claim~\ref{le1} it follows that $\max(\deg_G(p):p\in[1,3])=3=\max(\deg_G(p):p\in[4,6])$. So, by Claim~\ref{D3}, $r(i,j,k)\ge q-9\ge7$. 

2. With $q\ge19$ we have $\max(r(1,2,3),r(4,5,6))\ge10$, which contradicts Claim~\ref{r3r3}. $\ep$
\vskip1mm

Use for an edge $\{i,k\}$ of the graph $K_{3,3}$ with bipartition $\{[1,3],[4,6]\}$  the label $r(i,k)\in[0,2]$. A colour $\alpha\in C_2$ \textit{corresponds} to a set $E\subseteq E(K_{3,3})$ if there is $\{i,k\}\in E$ such that $\alpha\in\ro(i,k)$. We denote by $\Col(E)$ the set of colours corresponding to $E$. Colours $\alpha,\beta\in C_2$, $\alpha\ne\beta$, are \textit{column-related} (in $M$) provided that $\ro(\alpha)\cap\ro(\beta)=\emptyset$; then
the pair $\{\alpha,\beta\}$ is not row-based, and hence it is column-based. Evidently, if $\gamma_j\in\ro(i_j,k_j)$ for $j\in[1,l]$ and $\{\gamma_j:j\in[1,l]\}$ is a set of pairwise column-related 2-colours, then $\{\{i_j,k_j\}:j\in[1,l]\}$ is a matching in $K_{3,3}$, and so $l\le3$.

A nonempty set $A\subseteq C_2$ is of the \textit{type} $n_1^{a_1}\negthinspace\dots n_k^{a_k}$ if $(n_1,\dots,n_k)$ is a decreasing sequence of integers from the interval $[1,|A|]$ such that $|\{j\in[1,q]:|A\cap\co(j)|=n_i\}|=a_i\ge1$ ($a_i$ columns of $M$ contain $n_i$ colours of $A$) for each $i\in[1,k]$, and $\sum_{i=1}^k a_in_i=2|A|$. Clearly, the type of $A$ is unique. In fact, only types of 3-element sets $A$ such that colours of $A$ are pairwise column-related will be important for us. In such a case the type $n_1^{a_1}\negthinspace\dots n_k^{a_k}$ satisfies a necessary condition $\sum_{i=1}^ka_i\binom{n_i}2\ge\binom{|A|}{2}=3$, where the involved sum represents the number of pairs of colours of $A$ in columns of $M$ belonging to $\Cov(A)$.

\begin{claim}\label{A}
If $\{i,j,k\}=\{1,2,3\}$, $\{l,m,n\}=\{4,5,6\}$, $\alpha\in\ro(i,l)$, $\beta\in\ro(j,m)$, $\gamma\in\ro(k,n)$, and the set $\{\alpha,\beta,\gamma\}$ is of the type $2^3$, then

$1.$ $c_2=15$;

$2.$ each colour of $C_2\setminus\{\alpha,\beta,\gamma\}$ occupies exactly one position in the set $[1,6]\times\Cov(\alpha,\beta,\gamma)$;

$3.$ $C_2=\bigcup_{s\in\Cov(\alpha,\beta,\gamma)}\co(s)$.
\end{claim}

\bp 1. A colour $\delta\in\{\alpha,\beta,\gamma\}$ must be in the set $[1,6]\times\Cov(\{\alpha,\beta,\gamma\}\setminus\{\delta\})$, otherwise neither of pairs $\{\delta,\varepsilon\}$ with $\varepsilon\in\{\alpha,\beta,\gamma\}\setminus\{\delta\}$ is good in $M$ (because each such pair is necessarily column-based). Therefore, $15\le c_2=3+|C_2\setminus\{\alpha,\beta,\gamma\}|\le3+(6\cdot3-3\cdot2)=15$, $c_2=15$ and $|C_2\setminus\{\alpha,\beta,\gamma\}|=12$.

2. By Claim~\ref{A}.1, $|C_2\setminus\{\alpha,\beta,\gamma\}|=12$. Each colour of $C_2\setminus\{\alpha,\beta,\gamma\}$ occupies a position in $[1,6]\times\Cov(\alpha,\beta,\gamma)$, hence the number of positions in $[1,6]\times\Cov(\alpha,\beta,\gamma)$, occupied by colours of $C_2\setminus\{\alpha,\beta,\gamma\}$, is at least twelve. On the other hand, the number of positions in $[1,6]\times\Cov(\alpha,\beta,\gamma)$, that are not occupied by colours of $C_2\setminus\{\alpha,\beta,\gamma\}$, is twelve. Consequently, each colour of $C_2\setminus\{\alpha,\beta,\gamma\}$ appears in $[1,6]\times\Cov(\alpha,\beta,\gamma)$ exactly once.

3. The set $\bigcup_{s\in\Cov(\alpha,\beta,\gamma)}\co(s)$ clearly contains each colour of $\{\alpha,\beta,\gamma\}$ as well as each colour of $C_2\setminus\{\alpha,\beta,\gamma\}$ (by Claim~\ref{A}.2). The inclusion $\bigcup_{s\in\Cov(\alpha,\beta,\gamma)}\co(s)\subseteq C_2$ is trivial. $\ep$

\begin{claim}\label{B}
	If $\{i,j,k\}=\{1,2,3\}$, $\{l,m,n\}=\{4,5,6\}$, $\alpha\in\ro(i,l)$, $\beta\in\ro(j,m)$ and $\gamma\in\ro(k,n)$, then the set $\{\alpha,\beta,\gamma\}$ is of the type $3^11^3$ or $2^3$.
\end{claim}

\bp Possible types of the set $\{\alpha,\beta,\gamma\}$ (that satisfy the necessary condition) are $3^2$, $3^12^11^1$, $3^11^3$ and $2^3$.

If $\{\alpha,\beta,\gamma\}$ is of the type $3^2$, consider a colour $\delta\in C_2\setminus\{\alpha,\beta,\gamma\}$ not appearing in $[1,6]\times\Cov(\alpha,\beta,\gamma)$; the number of such $\delta$'s is at least $c_2-3-2(6-3)\ge6$.
Then the number of pairs $\{\delta,\varepsilon\}$ with $\varepsilon\in\{\alpha,\beta,\gamma\}$, that are good in $M$ (and necessarily row-based), is at most two, while $|\{\alpha,\beta,\gamma\}|=3$, a contradiction.

If $\{\alpha,\beta,\gamma\}$ is of the type $3^12^11^1$, reasoning similarly as in the proof of Claim~\ref{A} one can show that $c_2=15$, and each colour $\delta\in C_2\setminus\{\alpha,\beta,\gamma\}$ occupies exactly one position in $[1,6]\times\Cov(\alpha,\beta,\gamma)$. Observe that if $b\in\Cov(\alpha,\beta,\gamma)$ satisfies $\{\alpha,\beta,\gamma\}\cap\co(b)=\{\varepsilon\}$, there is (a unique) $a\in[1,6]$ such that $\{\alpha,\beta,\gamma\}\cap\ro(a)=\{\varepsilon\}$ and $\zeta=(M)_{a,b}\ne\varepsilon$. The second copy of $\zeta$ is in $[1,6]\times([1,q]\setminus\Cov(\alpha,\beta,\gamma))$; so, the number of pairs $\{\zeta,\eta\}$ with $\eta\in\{\alpha,\beta,\gamma\}\setminus\{\varepsilon\}$, that are good in $M$ (and necessarily row-based), is one, while $|\{\alpha,\beta,\gamma\}\setminus\{\varepsilon\}|=2$, a contradiction. $\ep$

\begin{claim}\label{C}
	If $\{i,j,k\}=[1,3]$, $\{l,m,n\}=[4,6]$, $\ro(i,l)=\{\alpha_1,\alpha_2\}$, $\ro(j,m)=\{\beta_1,\beta_2\}$, $\gamma_1\in\ro(k,n)$ and $a,b\in[1,2]$, then the set $\{\alpha_a,\beta_b,\gamma_1\}$ is of the type $3^11^3$.
\end{claim}

\bp If the claim is false, then, by Claims~\ref{B} and \ref{A}.2, (w) $\{\alpha_1,\beta_1,\gamma_1\}$ is of the type $2^3$, $\Cov(\alpha_1,\beta_1,\gamma_1)=[1,3]$, and $\alpha_2$ occupies exactly one position in $[1,6]\times[1,3]$. Clearly, $\alpha_2$ appears in the column of $M$ containing both $\beta_1$ and $\gamma_1$ (the pair $\{\beta_1,\gamma_1\}$ is column-based), for otherwise $\{\alpha_2,\beta_1,\gamma_1\}$ would be of the type $2^21^2$, which is impossible by Claim~\ref{B}; so, by the same claim, $\{\alpha_2,\beta_1,\gamma_1\}$ is of the type $3^11^3$, (w) $\Cov(\alpha_2,\beta_1,\gamma_1)=[1,4]$.

Proceeding similarly as above we see that $\beta_2$ appears in the column containing $\alpha_1$, $\gamma_1$, and $\{\alpha_1,\beta_2,\gamma_1\}$ is of the type $3^11^3$, so that the pair $\{\alpha_2,\beta_2\}$ can be good in $M$ only if $\{\alpha_2,\beta_2\}\subseteq\co(4)$. Consequently, $\{\alpha_2,\beta_2,\gamma_1\}$ is of the type $2^3$. If $\{\alpha_1,\beta_1\}\subseteq\co(b)$, $b\in[1,3]$, then $\Cov(\alpha_2,\beta_2,\gamma_1)=[1,4]\setminus\{b\}$, so that, by Claims~\ref{A}.2 and \ref{A}.3, $\co(b),\co(4)\subseteq C_2$ and $\co(b,4)=\co(b)\setminus\{\alpha_1,\beta_1\}=\co(4)\setminus\{\alpha_2,\beta_2\}$. In such a case any colour $\delta\in\co(b,4)\subseteq C_2$ satisfies $2=\exc(\delta)\ge|\co(b,4)\setminus\{\delta\}|=3$, a contradiction. $\ep$

\begin{claim}\label{D}
If $\{i,j,k\}=\{1,2,3\}$, $\{l,m,n\}=\{4,5,6\}$, $\ro(i,l)=\{\alpha_1,\alpha_2\}$, $\ro(j,m)=\{\beta_1,\beta_2\}$, $\ro(k,n)\in\{\{\gamma_1\},\{\gamma_1,\gamma_2\}\}$ and $C _2^1=\ro(i,l)\cup\ro(j,m)\cup\ro(k,n)$, then there is $a\in[1,q]$ such that $C_2^1\subseteq\co(a)$.
\end{claim}

\bp By Claim~\ref{C} we know that (among others) all of the following sets are of the type $3^11^3$: $\{\alpha_1,\beta_1,\gamma_1\}$, $\{\alpha_2,\beta_1,\gamma_1\}$, $\{\alpha_1,\beta_2,\gamma_1\}$ and $\{\alpha_1,\beta_1,\gamma_2\}$ (provided that $\gamma_2\in\ro(k,n)$). Then there is $a\in[1,q]$ with $\{\alpha_1,\beta_1,\gamma_1\}\subseteq\co(a)$. Now $|\{\alpha_2,\beta_1,\gamma_1\}\cap\co(a)|\ge2>1$, hence $|\{\alpha_2,\beta_1,\gamma_1\}\cap\co(a)|=3$ and $\alpha_2\in\co(a)$. A similar reasoning shows that $\beta_2\in\co(a)$ as well as $\gamma_2\in\co(a)$ (under the assumption $\gamma_2\in\ro(k,n))$. $\ep$
\vskip1mm

\begin{claim}\label{comp}
	If $\cM^1$ is a perfect matching in $K_{3,3}$, there are uniquely determined perfect matchings $\cM^2$ and $\cM^3$ in $K_{3,3}$ such that $\{\cM^1,\cM^2,\cM^3\}$ is a partition of $E(K_{3,3})$.
\end{claim}
\bp The set $E(K_{3,3})\setminus\cM^1$ induces a 6-vertex cycle in $K_{3,3}$ whose edge set has a unique partition $\{\cM^2,\cM^3\}$ into perfect matchings of $K_{3,3}$. $\ep$
\vskip1mm

For a matching $\cM$ in $K_{3,3}$ we denote by $\wt(\cM)$ the \textit{weight} of $\cM$, i.e., the sum of labels of edges of $\cM$.

\begin{claim}\label{E}
No perfect matching of $K_{3,3}$ is of weight $6$.
\end{claim}

\bp Suppose that $\wt(\cM)=6$, where $\cM=\{\{i,l\},\{j,m\},\{k,n\}\}$, $\{i,j,k\}=\{1,2,3\}$ and $\{l,m,n\}=\{4,5,6\}$. By Claim~\ref{r2} then $6=\wt(\cM)=r(i,l)+r(j,m)+r(k,n)\le2+2+2=6$, hence $r(i,l)=r(j,m)=r(k,n)=2$. Let $\ro(i,l)=\{\alpha_1,\alpha_2\}$, $\ro(j,m)=\{\beta_1,\beta_2\}$, $\ro(k,n)=\{\gamma_1,\gamma_2\}$, $C_2^1=\{\alpha_1,\alpha_2,\beta_1,\beta_2,\gamma_1,\gamma_2\}$ and $C_2'=C_2\setminus C_2^1$. By Claim~\ref{D}, there is $a\in[1,q]$ such that $C_2^1\subseteq\co(a)$, which implies (since $|C_2^1)|=6=|\co(a)|$) $C_2^1=\co(a)$.

If $s,t,u\in\{1,2\}$, then, by Claim~\ref{C}, the set $\{\alpha_s,\beta_t,\gamma_u\}$ is of the type $3^11^3$. That is why, if $b\in[1,q]\setminus\{a\}$, then $|\co(a)\cap\co(b)|\le2$; moreover, $|\co(a)\cap\co(b)|=2$ is possible only if $\co(a)\cap\co(b)\in\{\{\alpha_1,\alpha_2\},\{\beta_1,\beta_2\},\{\gamma_1,\gamma_2\}\}$.

Consider a colour $\delta\in C_2'$. From above it is clear that each of both copies of $\delta$ provides  at most three pairs $\{\delta,\varepsilon\}$, $\varepsilon\in C_2^1$, that are good in $M$ (one of them is row-based, while at most two are column-based). Therefore, all six pairs $\{\delta,\varepsilon\}$ with $\varepsilon\in C_2^1$ can be good in $M$ only if each copy of $\delta$ appears in a column containing both colours $\zeta_1,\zeta_2$ for a suitable $\zeta\in\{\alpha,\beta,\gamma\}$. In such a case, however, the number of colours $\delta\in C_2'$, fulfilling the condition that all pairs $\{\delta,\varepsilon\}$ with $\varepsilon\in C_2^1$ are good in $M$, is at most $\lfloor\frac12(6\cdot3-|C_2^1|)\rfloor=6<9=|C_2'|$, a contradiction; note that there are at most three $b$'s with $b\in[1,q]\setminus\{a\}$ satisfying $C_2^1\cap\co(b)\in\{\{\alpha_1,\alpha_2\},\{\beta_1,\beta_2\},\{\gamma_1,\gamma_2\}\}$. $\ep$

\begin{claim}\label{sp}
The following statements are true:

$1.$ $c_2=15$;

$2.$ $c_{4+}=0$;

$3.$ there are $I,K\in\{[1,3],[4,6]\}$, $I\ne K$, and $k\in K$ such that for any $i\in I$ and any $l\in K\setminus\{k\}$ it holds $r(i,k)=1$ and $r(i,l)=2$.
\end{claim}

\bp 1., 2. Given a perfect matching $\cM^1$ of $K_{3,3}$, by Claim~\ref{comp} we know that there is a unique partition $\{\cM^1,\cM^2,\cM^3\}$ of $E(K_{3,3})$ into perfect matchings of $K_{3,3}$. By Claims~\ref{+m}.3 and \ref{E} then $15\le c_2=\sum_{n=1}^3\wt(\cM^n)\le\sum_{n=1}^35=15$, $c_2=15$, $c_{4+}=0$ (Claim~\ref{+m}.8) and $\wt(\cM^n)=5$, $n=1,2,3$; thus $\wt(\cM)=5$ for each perfect matching $\cM$ of $K_{3,3}$ ($\cM$ can be chosen as $\cM^1$). Among other things this means that no edge of $K_{3,3}$ is labelled with 0: otherwise any perfect matching of $K_{3,3}$ containing such an edge would be of weight at most $2\cdot2=4$ (Claim~\ref{r2}), a contradiction. 

3. Denote by $l(e)$ the label of an edge $e\in E(K_{3,3})$, and by $l_n$ the number of edges of $K_{3,3}$ labelled with $n$, $n=1,2$; then $l_1+l_2=9$, $15=c_2=l_1+2l_2=9+l_2$, $l_2=6$ and $l_1=3$. Let $\{e_1,e_2,e_3\}=\{e\in E(K_{3,3}):l(e)=1\}$.

If $a,b\in[1,3]$, $a\ne b$, then $e_a\cap e_b\ne\emptyset$. To see it suppose that $e_a\cap e_b=\emptyset$, and take $e\in E(K_{3,3})\setminus\{e_a,e_b\}$ such that $\{e_a,e_b,e\}$ is a perfect matching of $K_{3,3}$. The 6-vertex cycle in $K_{3,3}$ with the edge set $E(K_{3,3})\setminus\{e_a,e_b,e\}$ has at least five edges labelled with 2, hence one can find in $K_{3,3}$ a perfect matching $\cM\subseteq E(K_{3,3})\setminus\{e_a,e_b,e\}$ with $\wt(\cM)=3\cdot2=6$, a contradiction. 

Thus $e_1\cap e_2\ne\emptyset$, $e_1\cap e_3\ne\emptyset$ and $e_2\cap e_3\ne\emptyset$. Since the subgraph of $K_{3,3}$ induced by the set of edges $\{e_1,e_2,e_3\}$ is bipartite, this is possible only if there is a vertex $k\in[1,6]=V(K_{3,3})$ such that $e_1\cap e_2\cap e_3=\{k\}$. Having in mind that the bipartition of $K_{3,3}$ is $\{[1,3],[4,6]\}$, there are $I,K\in\{[1,3],[4,6]\}$ such that $I\ne K$, $k\in K$, and for any $i\in I$ and any $l\in K\setminus\{k\}$ it holds $r(i,k)=1$ and $r(i,l)=2$. $\ep$
\vskip1mm

Based on Claim~\ref{sp}.3 we suppose (w) $I=[1,3]$, $K=[4,6]$ and $k=6$ so that for any $i\in[1,3]$ and any $l\in[4,5]$ we have $r(i,6)=1$ and $r(i,l)=2$. Let $\ro(i,6)=\{\alpha_{i,6}\}$, $i=1,2,3$. 

If $\cM$ is a perfect matching in $K_{3,3}$, there is $s\in[1,6]$ such that $\cM=\cM^s$, where 
\begin{alignat*}{2}
\cM^1&=\{\{1,6\},\{2,4\},\{3,5\}\},\quad &\cM^2&=\{\{1,5\},\{2,6\},\{3,4\}\},\\
\cM^3&=\{\{1,4\},\{2,5\},\{3,6\}\}, &\cM^4&=\{\{1,6\},\{2,5\},\{3,4\}\},\\
\cM^5&=\{\{1,4\},\{2,6\},\{3,5\}\}, &\cM^6&=\{\{1,5\},\{2,4\},\{3,6\}\}. 
\end{alignat*}
Applying Claim~\ref{D} on five colours of $\Col(\cM^s)$ we see that there is $a^s\in[1,q]$ such that $\Col(\cM^s)\subseteq\co(a^s)$. Since $|\Col(\cM^s)\cap\Col(\cM^t)|\le2$ for $s,t\in[1,6]$, $s\ne t$, it is clear that $a^s\ne a^t$. From now on (w) 
\begin{align*}
(M)_{i,i}&=\alpha_{i,6}=(M)_{6,i+3},\\
\{\co(i)\cap C_2,\co(i+3)\cap C_2\}&=\{\Col(\cM^i),\Col(\cM^{i+3})\},\ i=1,2,3.
\end{align*}
As a consequence of Claims~\ref{sp}.1 and \ref{sp}.2 then all positions in the set 
\[
S=\{(1,4),(2,5),(3,6),(6,1),(6,2),(6,3)\}
\]
are occupied by 3-colours, and the same is true for the set of positions $[1,6]\times[7,q]$. Let $C_3^*=C_3\setminus(\ro(1,2,3)\cup\ro(4,5,6))$ and $c_3^*=|C_3^*|$. 

\begin{claim}\label{H}
Each position in the set $S$ is occupied by a colour of $C_3^*$.
\end{claim} 

\bp If a position $(i,i+3)$ with $i\in[1,3]$ is occupied by a colour $\beta\in\ro(1,2,3)$, that copy of $\beta$ provides no pair $\{\beta,\gamma\}$ with $\gamma\in\ro(4,5,6)$ that is good in $M$. Claim~\ref{le18} yields $\min(r(1,2,3),r(4,5,6))\ge q-9\ge7$. However, the number of pairs $\{\beta,\gamma\}$ with $\gamma\in\ro(4,5,6)$, that are good in $M$ (and necessarily column-based), is at most $\sum_{l\in\Cov(\beta)\setminus\{i+3\}}|\ro(4,5,6)\cap\co(l)|\le2\cdot3=6<r(4,5,6)$, a contradiction.

Similarly, if a position $(6,j)$ with $j\in[1,3]$ is occupied by a colour $\delta\in\ro(4,5,6)$, then the number of pairs $\{\delta,\varepsilon\}$ with $\varepsilon\in\ro(1,2,3)$, that are good in $M$, is at most $\sum_{l\in\Cov(\delta)\setminus\{j\}}|\ro(1,2,3)\cap\co(l)|\le2\cdot3=6<r(1,2,3)$, a contradiction. $\ep$

\begin{claim}\label{I}
$C_3^*\subseteq\ro(6)$.
\end{claim}

\bp Consider a colour $\beta\in C_3^*$, and let by $n_i$, $i\in\ro(\beta)$, denote the number of pairs $\{\beta,\gamma\}$ with $\gamma\in\{\alpha_{1,6},\alpha_{2,6},\alpha_{3,6}\}$, that are good in $M$, and are provided by the copy of $\beta$ in the row $i$ of $M$. If $i\in[1,3]$, then $n_i=1$ (with $\gamma=\alpha_{i,6}$), while $i\in[4,5]$ implies $n_i=0$, and $i=6$ yields $n_i=n_6=3$. Now, provided that $\beta\notin\ro(6)$, from $1\le|\ro(\beta)\cap[1,3]|\le2$ we obtain $\sum_{i\in\ro(\beta)}n_i=|\ro(\beta)\cap[1,3]|\le2<|\{\alpha_{1,6},\alpha_{2,6},\alpha_{3,6}\}|$, a contradiction. $\ep$

\begin{claim}\label{J}
$q=16$, and there is a $3$-colour $\beta\in\ro(i,m,6)$ with $i\in[1,3]$ and $m\in[4,5]$ that occupies a position in $\{6\}\times[7,16]$.
\end{claim}

\bp Since $c_{4+}=0$ (Claim~\ref{sp}.1) and $r_2(6)=\sum_{l=1}^3r(l,6)=3$, by Claims~\ref{I} and \ref{le18}.1 we have $q=c_2(6)+c_3(6)=3+[r(4,5,6)+c_3^*]$ and $c_3^*=q-3-r(4,5,6)\ge q-3-(q-9)=6$. On the other hand, from Claim~\ref{sp}.1 we get $|C|=2q+5=c_2+c_3=15+[r(1,2,3)+r(4,5,6)+c_3^*]$ so that $q-3=r(4,5,6)+c_3^*=2q-10-r(1,2,3)$, $r(1,2,3)=q-7$, and then Claim~\ref{le18}.1 yields $9\ge r(1,2,3)=q-7\ge16-7=9$, $r(1,2,3)=9$ and $q=16$. Using Claim~\ref{le18}.1 again we obtain $7=q-9\le r(4,5,6)=q-3-c_3^*=13-c_3^*\le7$, $r(4,5,6)=7$ and $c_3^*=6$.

The number of positions in $[1,3]\times[1,16]$ occupied by colours of $C_3^*$ is equal to $3\cdot16-c_2-3r(1,2,3)=48-15-27=6=c_3^*$, and each colour $\gamma\in C_3^*$ is involved in that counting, since $1\le|\ro(\gamma)\cap[1,3]|\le2$. Therefore, for any $\gamma\in C_3^*$ we get $|\ro(\gamma)\cap[1,3]|=1$ and $|\ro(\gamma)\cap[4,6]|=2$. 

Let $\beta\in C_3^*$ occupy a position in $\{6\}\times[7,16]$; the number of such $\beta$'s is $c_3^*-3=3$, because $C_3^*\subseteq\ro(6)$ (Claim~\ref{I}), the positions in $\{6\}\times[1,3]$ are occupied by colours of $C_3^*$ (Claim~\ref{H}) and $(M)_{6,l}=\alpha_{l-3,6}\in C_2$, $l=4,5,6$. Then $\ro(\beta)=\{i,m,6\}$, where $i\in[1,3]$ and $m\in[4,5]$. $\ep$
\vskip1mm

We are now ready to finish our analysis by showing that for a colour $\beta\in C_3^*$ of Claim~\ref{J} the number of pairs $\{\beta,\gamma\}$ with $\gamma\in C_2$, that are good in $M$, is less than $c_2=15$, which represents a final contradiction proving Theorem~\ref{16+}. 

First of all, if $\beta$ occupies a position in $\{i\}\times[7,16]$, then all pairs $\{\beta,\gamma\}$ with $\gamma\in C_2$, that are good in $M$, are row-based. In such a case the number of such pairs is $r_2(i)+r_2(m)+r_2(6)-[r(i,m)+r(i,6)]=5+6+3-(2+1)=11<15$, a contradiction.

Therefore, we have $\beta=(M)_{i,i+3}$, and this is the only copy of $\beta$ able to provide a column-based pair $\{\beta,\gamma\}$ that is good in $M$. Now we can find explicitly a colour $\gamma\in C_2$ such that the pair $\{\beta,\gamma\}$ is not good in $M$. Indeed, then $\beta\in\co(i+3)\cap C_2=\Col(\cM)$, where the perfect matching $\cM$ in $K_{3,3}$ satisfies $\cM=\{\{i,6\},\{j,m\},\{k,n\}\}$, $\{i,j,k\}=\{1,2,3\}$, $m\in[4,5]$ and $n=9-m$. Then $\co(i+3)=\{\beta\}\cup\ro(i,6)\cup\ro(j,m)\cup\ro(k,n)$, and so $\ro(j,n)\cap\co(i+3)=\emptyset$ (recall that $r(i,6)=1$ and $r(j,m)=2=r(k,n)$); thus, the pair $\{\beta,\gamma\}$ with $\gamma\in\ro(j,n)$ is not column-based.  Moreover, $\ro(\beta)\cap\ro(\gamma)=\{i,m,6\}\cap\{j,n\}=\emptyset$, the pair $\{\beta,\gamma\}$ is not row-based, hence it is not good in $M$, a contradiction.
\end{proof}
\vskip1mm

The solution of the problem of determining $\achr(K_6\square K_q)$ is now complete. It is summarised in the final theorem of the paper, where 
\begin{align*}
J_3&=[2,3]\cup\{q\in[41,\infty):q\equiv1\hskip-4mm\pmod2\},\\
J_4&=\{1,4,7\}\cup[16,40]\cup\{q\in[42,\infty):q\equiv0\hskip-3.9mm\pmod2\},\\
J_5&=\{5,8\},\\
J_6&=\{6\}\cup[9,15].
\end{align*}

\begin{theorem}\label{final}
If $a\in[3,6]$ and $q\in J_a$, then $\achr(K_6\square K_q)=2q+a$.
\end{theorem}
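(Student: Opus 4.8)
The plan is to recognise Theorem~\ref{final} as a pure compilation: for every pair $(a,q)$ with $a\in[3,6]$ and $q\in J_a$ the equality $\achr(K_6\square K_q)=2q+a$ has already been established, either in the present paper or in the companion works, so the whole task reduces to assigning each $q$ to the correct source and to checking that $J_3,\dots,J_6$ genuinely partition $[1,\infty)$. First I would split the argument according to the index $a$ and, within each case, quote the matching result, each of which supplies simultaneously a construction (a lower bound via Proposition~\ref{general}) and the corresponding impossibility argument (the matching upper bound).

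The cases settled in this paper form the backbone. For $a=6$ and $q\in[9,15]$ the value $2q+6$ is exactly Theorem~\ref{915f}; for $a=5$ and $q=8$ the equality $\achr(K_6\square K_8)=21=2\cdot8+5$ is Theorem~\ref{K6K8}; and for $a=4$ with $q\in[16,40]$ or with even $q\ge42$ the value $2q+4$ is Theorem~\ref{16+}. Together these cover precisely the range $8\le q\le40$ together with all even $q\ge42$ announced in the abstract.

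For the remaining $q$'s I would appeal to the other two papers of the series and to the earlier literature cited in Section~\ref{sol}. For $a=3$ and odd $q\ge41$ the value $2q+3$ is the main result of \cite{Ho1}, while for $a=4$ and $q=7$ the equality $\achr(K_6\square K_7)=18=2\cdot7+4$ is the (laborious) content of \cite{Ho2}. The two historical evaluations $\achr(K_6\square K_4)=12=2\cdot4+4$ of Hor\v n\'ak and Puntig\'an~\cite{HoPu} and $\achr(K_6\square K_6)=18=2\cdot6+6$ of Bouchet~\cite{B} place $q=4$ in $J_4$ and $q=6$ in $J_6$. The smallest orders $q\in\{1,2,3,5\}$ are either trivial or quoted: $K_6\square K_1\cong K_6$ yields $\achr=6=2\cdot1+4$, and the evaluations $\achr(K_6\square K_2)=7$, $\achr(K_6\square K_3)=9$ and $\achr(K_6\square K_5)=15$ complete $J_3$ and $J_5$.

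The only genuine verification, and hence the main (purely organisational) obstacle, is to confirm that the four index sets partition $[1,\infty)$, so that each $q$ is matched to exactly one value $2q+a$: directly, $q=1,4,7\in J_4$, $q=2,3\in J_3$, $q=5,8\in J_5$, $q=6\in J_6$, $[9,15]\subseteq J_6$, $[16,40]\subseteq J_4$, every odd $q\ge41$ lies in $J_3$ and every even $q\ge42$ lies in $J_4$, covering each positive integer exactly once. With this check in place every $q$ carries both a construction and a matching upper bound, and the theorem follows with no new computation.
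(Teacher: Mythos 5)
Your proposal is correct and follows essentially the same route as the paper: Theorem~\ref{final} is proved there purely by compilation, citing \cite{HoPu} (and \cite{ChiF}) for $q\le4$, \cite{HoPc} for $q=5$, \cite{B} for $q=6$, \cite{Ho2} for $q=7$, \cite{Ho1} for odd $q\ge41$, and Theorems~\ref{K6K8}, \ref{915f} and \ref{16+} for the remaining cases, exactly as you do. Your additional check that the sets $J_3,\dots,J_6$ cover each $q$ consistently, and your arithmetic matching each quoted value to $2q+a$, are both accurate.
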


\begin{proof}
The achromatic number of $K_6\square K_q$ was analysed in \cite{HoPu} for $q\le4$ (for $q\le3$ see also Chiang and Fu~\cite{ChiF}), in Hor\v n\'ak and P\v cola~\cite{HoPc} for $q=5$, in~\cite{B} for $q=6$, in~\cite{Ho2} for $q=7$, and in~\cite{Ho1} for $q\in[41,\infty)$ with $q\equiv1\pmod2$. The remaining statements have been proved in the present paper, see Theorem~\ref{K6K8} for $q=8$, Theorem~\ref{915f} for $q\in[9,15]$, and Theorem~\ref{16+} for $q$ satisfying either $q\in[16,40]$ or $q\in[42,\infty)$ together with $q\equiv0\pmod2$.
\end{proof}

\begin{corollary}
If $q\in[1,\infty)$, then $2q+3\le\achr(K_6\square K_q)\le2q+6$. $\qed$
\end{corollary}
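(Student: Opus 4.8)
The plan is to read the corollary off Theorem~\ref{final}. That theorem already evaluates $\achr(K_6\square K_q)=2q+a$ exactly, where $a\in\{3,4,5,6\}$ is selected by the membership $q\in J_a$. Hence the only point requiring verification is that the four index sets $J_3,J_4,J_5,J_6$ cover all of $[1,\infty)$, so that every positive integer $q$ is assigned some value of $a$ lying in the interval $[3,6]$.

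First I would check the covering by a short case analysis. The sporadic small values are placed directly: $1,4,7\in J_4$, $2,3\in J_3$, $5,8\in J_5$ and $6\in J_6$. The two finite blocks $[9,15]$ and $[16,40]$ sit in $J_6$ and $J_4$ respectively. For the tail $q\ge41$ one argues by parity: every odd $q\ge41$ belongs to $J_3$, and every even $q\ge42$ belongs to $J_4$. Putting these together gives $J_3\cup J_4\cup J_5\cup J_6=[1,\infty)$; one sees along the way that the union is in fact disjoint, although only the covering is needed below.

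With the covering in hand, the conclusion is immediate: for an arbitrary $q\in[1,\infty)$ choose $a\in\{3,4,5,6\}$ with $q\in J_a$ and apply Theorem~\ref{final} to obtain $\achr(K_6\square K_q)=2q+a$. Since $3\le a\le6$, this yields at once $2q+3\le\achr(K_6\square K_q)\le2q+6$.

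I expect no genuine obstacle: the corollary is a bookkeeping consequence of the main theorem, and the only care needed is the routine confirmation that no positive integer escapes the lists $J_a$. As a consistency check, the two endpoints of the range are actually attained---$2q+3$ already for $q\in[2,3]$ (where $a=3$) and $2q+6$ for $q\in[9,15]$ (where $a=6$)---so the stated interval is the tightest uniform bound available.
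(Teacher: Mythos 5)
Your proof is correct and matches the paper's intent exactly: the corollary carries a $\qed$ in its statement because it is an immediate consequence of Theorem~\ref{final}, requiring only the observation that $J_3\cup J_4\cup J_5\cup J_6=[1,\infty)$ and that $a\in[3,6]$, which is precisely the bookkeeping you carry out.
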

\vskip1mm

\noindent{\large{\bf Acknowledgements.}} This work was supported by the Slovak Research and Development Agency under the contract APVV-19-0153.

\end{document}